\documentclass[12pt]{amsart}
\usepackage{amssymb,amsmath,amsthm,mathtools,mathrsfs,hyperref}
\usepackage{enumerate}
\usepackage{graphicx}
\usepackage{array}
\usepackage{multicol}

\setlength{\textheight}{220mm} \setlength{\textwidth}{155mm}
\setlength{\oddsidemargin}{1.25mm}
\setlength{\evensidemargin}{1.25mm} \setlength{\topmargin}{0mm}

\newtheorem{theorem}{Theorem}[section]
\newtheorem{lemma}[theorem]{Lemma}

\newtheorem{prop}[theorem]{Proposition}
\theoremstyle{definition}

\theoremstyle{remark}

\numberwithin{equation}{section}

\allowdisplaybreaks

\newcommand{\abs}[1]{\lvert#1\rvert}
\newcommand\mylabel[1]{\label{#1}}

\newcommand\thm[1]{\ref{thm:#1}}
\newcommand\lem[1]{\ref{lem:#1}}

\newcommand\propo[1]{\ref{propo:#1}}
\newcommand\eqn[1]{(\ref{eq:#1})}
\newcommand\sect[1]{\ref{sec:#1}}

\newcommand\restr[2]{{
  \left.\kern-\nulldelimiterspace 
  #1 
  \littletaller 
  \right|_{#2} 
  }}
\newcommand{\littletaller}{\mathchoice{\vphantom{\big|}}{}{}{}}  
\newcommand\mutwid{\widetilde{\mu}}
\DeclareMathOperator{\IM}{Im}
\DeclareMathOperator{\sgn}{sgn}

\begin{document}

\title[]
{Transformation of third order mock theta functions and new $q$-series identities}

\author{Frank Garvan}
\address{Department of Mathematics, University of Florida, Gainesville
FL 32611, USA}
\email{fgarvan@ufl.edu}
\author{Avi Mukhopadhyay}
\address{Department of Mathematics, University of Florida, Gainesville
FL 32611, USA}
\email{mukhopadhyay.avi@ufl.edu}

\date{\today}

\subjclass[2020]{11B65,11F27,11F37}             

\keywords{mock theta functions, mock modular forms, $\omega(q)$,$f(q)$.}

\begin{abstract}
Ramanujan introduced mock theta functions in his last letter to G.H.Hardy. He provided examples and various relations between them. G.N.Watson found transformations for the third order mock theta functions $f(q)$ and $\omega$(q). Zwegers in 2000 built on Watson's techniques to complete these mock theta functions and connected them to real analytic modular forms. We show how to derive these transformations using Lerch sums. To show the equivalence of the results involves some new $q$-series identities thus resulting in a new proof of Zwegers' theorem.
\end{abstract}
\maketitle

\section{Introduction}
\mylabel{sec:intro}
Ramanujan in his last letter to G.H. Hardy in 1920 introduced what he called mock theta functions.
\begin{quote}
      I am extremely sorry for not writing you a single letter up to now . . . I discovered very interesting functions recently which I call 'Mock' $\theta$-functions. Unlike the 'False' $\theta$-functions (studied partially by Prof. Rogers in his interesting paper) they enter into mathematics as
beautifully as ordinary $\theta$-functions. I am sending you this letter with some examples . .
\end{quote}
Ramanujan wrote down a list of $17$ examples of mock $\theta$-functions and arranged them in terms of their 'order', a concept he did not define.However, he did mention properties which a mock $\theta$-function should satisfy.
Accordingly a mock $\vartheta$-function is a function $M$
of the complex variable $q$ (Ramanujan calls this the Eulerian
form), which converges for $|q|<1$ and satisfies the following
conditions\footnote{Stated in Rhoades\cite[p.7592]{Rhoades2013},
which follows the version of Ramanujan's definition given by Andrews
and Hickerson \cite{AndrewsHickerson1991} and Zwegers\cite{Zw2002}}:
\begin{enumerate}[(i)]
    \item $M(q)$ has infinitely many exponential singularities at roots of unity
    \item for every root of unity $\xi$, there is a $\vartheta$-function $\vartheta_{\xi}(q)$ such that the difference $M(q)-\vartheta_{\xi}(q)$ is bounded as $q$ approaches $\xi$ radially,
    \item There is no $\vartheta$-function that works for all $\xi$, i.e., $M(q)$ cannot be written as the sum of two functions, one of which is a $\vartheta$-function and the other which is bounded at all roots of unity.
\end{enumerate}
It seems that by a $\theta$-function Ramanujan means products and quotients of series of the form $\sum\limits_{n=-\infty}^{\infty}k^{n}q^{an^2+bn}$, where $k=-1,1$ and $a>0,b$ are rational. Some examples are the functions:
\begin{center}
\begin{enumerate}[(a)]
    \item \( \displaystyle f(q) = 1 + \frac{q}{(1+q)^2} + \frac{q^4}{(1+q)^2(1+q^2)^2} + \ldots \)
    
    \item \( \displaystyle \phi(q) = 1 + \frac{q}{(1+q^2)} + \frac{q^4}{(1+q^2)(1+q^4)} + \ldots \)
    
    \item \( \displaystyle \omega(q) = \frac{1}{(1-q)^2} + \frac{q^4}{(1-q)^2(1-q^3)^2} + \ldots \)
\end{enumerate}
\end{center}

The function $\omega(q)$ appears in Ramanujan's Lost Notebook and also in Watson's 1936 paper, The Final Problem\cite{Wa1936}, where alongside stating new examples he also proved some identities stated by Ramanujan in his last letter. He also formulated and proved modular transformation properties of third order mock theta functions.\\
From his work we see the following :

\begin{align*}
    q^{-1/24}f(q)=2\sqrt{\frac{2\pi}{\alpha}}q_{1}^{4/3}\omega(q_{1}^2)+4\sqrt{\frac{3\alpha}{2\pi}}\int\limits_{0}^{\infty} \dfrac{\sinh(\alpha t)}{\sinh(\frac{3\alpha t}{2})}e^{-\frac{3\alpha t^2}{2}}dt
\end{align*}
where $q:=\exp(-\alpha)$, $\beta:=\pi^{2}/\alpha$, $q_{1}:=\exp(-\beta)$ and $\alpha \in \mathbb{C}$ with Re$(\alpha)>0$.\\
In this identity we let $\alpha=-2 \pi i \tau $ so that  $q=\exp(2\pi i \tau)$, $q_{1}^{2}=\exp({\pi i \left(\frac{-1}{\tau}\right)})$ and we see that the term $2q_{1}^{4/3}\omega(q_{1}^2)$ occurs by applying $\tau \rightarrow \frac{-1}{\tau}$ to $2q^{\frac{1}{3}}\omega(q^{\frac{1}{2}})$ .

Basically Watson showed the following Lemma.
We define 
$F=(f_{0},f_{1},f_{2})^{T}$ where 
\[
f_0(\tau) = q^{-1/24} f(q), \quad f_1(\tau) = 2q^{1/3} \omega(q^{1/2}), 
\quad f_2(\tau) = 2q^{1/3} \omega(-q^{1/2}).
\]

\begin{lemma}[{ Watson \cite[pp.75-79]{Wa1936} }]
\mylabel{lem:watsontrans}
For  $q=e^{2 \pi i \tau}$, $\tau \in \mathbb{H}$  we have\\
\begin{align*}
F(\tau+1) &= \begin{pmatrix}\zeta_{24}^{-1} & 0 & 0\\0 & 0 &\zeta_{3}\\0 & \zeta_{3} & 0\end{pmatrix}F(\tau),\\
\text{and}\quad\dfrac{1}{\sqrt{-i\tau}}F(-1/\tau) &= \begin{pmatrix}0 & 1 & 0\\1 & 0 &0\\0 & 0 & -1\end{pmatrix}F(\tau)+R(\tau),     
\end{align*}
        with $\zeta_{n}=e^{2\pi i/n}, R(\tau)= 4\sqrt{3}\sqrt{-i\tau}(j_1(\tau)),-j_1(\tau)),j_3(\tau))^{T}$, where\\
           \begin{align*}
              j_{1}(\tau)=\int\limits_{0}^{\infty}e^{3\pi i \tau x^2}\dfrac{\sin(2\pi\tau x)}{\sin(3\pi \tau x)}dx,\\
              j_{2}(\tau)=\int\limits_{0}^{\infty}e^{3\pi i \tau x^2}\dfrac{\cos(\pi\tau x)}{\cos(3\pi \tau x)}dx,\\
              j_{3}(\tau)=\int\limits_{0}^{\infty}e^{3\pi i \tau x^2}\dfrac{\sin(\pi\tau x)}{\sin(3\pi \tau x)}dx.
\end{align*}
\end{lemma}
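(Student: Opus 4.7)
The plan is to verify the two matrix identities separately, corresponding to the generators $T$ and $S$ of $\mathrm{SL}_2(\mathbb{Z})$. For the translation $\tau \mapsto \tau+1$, the verification is a direct bookkeeping: $q = e^{2\pi i\tau}$ is invariant, so $f(q)$ is unchanged and $q^{-1/24} \mapsto \zeta_{24}^{-1}q^{-1/24}$, yielding the first row. For the remaining two entries, $q^{1/2} \mapsto -q^{1/2}$ and $q^{1/3} \mapsto \zeta_3 q^{1/3}$, so $\omega(q^{1/2})$ and $\omega(-q^{1/2})$ are interchanged while an overall $\zeta_3$ is picked up, producing the stated off-diagonal permutation. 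No deeper input is needed here.

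The substantive content is the $\tau \mapsto -1/\tau$ transformation, and here I would follow the strategy advertised by the paper's title. The first step is to represent each entry of $F(\tau)$ as an Appell--Lerch sum $\mu(u,v;\tau)$ multiplied by a simple theta quotient, with the parameters $(u,v)$ chosen so that the specializations reproduce $f(q)$ and $\omega(\pm q^{1/2})$; such representations go back to Hickerson and are standard. The theta quotient transforms cleanly under $S$ by the classical Jacobi inversion, contributing the prefactor $\sqrt{-i\tau}$ and the permutation matrix. The Lerch sum $\mu$ is only mock modular, but by Zwegers the completion $\widehat\mu := \mu + \tfrac{i}{2}R$ is a genuine Jacobi form. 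Applying the inversion formula for $\widehat\mu$ and then subtracting the completion terms to recover $\mu$ produces, on the right-hand side, the matrix action on $F(\tau)$ plus a correction built out of the Mordell function $R$. Assembling these three corrections ought to reproduce the vector $R(\tau)$.

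The main obstacle is reconciling Zwegers' function $R$, which is naturally an error-function series (or equivalently a Gaussian Mordell integral), with Watson's $j_1$ and $j_3$, which are integrals with $\sinh$/$\sinh$ kernels. Proving the relevant equivalence is precisely where the ``new $q$-series identities'' promised by the abstract enter the picture, and it also accounts for the fact that only $j_1$ and $j_3$ survive in $R(\tau)$: after combining the three $R$-pieces, the cosine-kernel contribution that would otherwise produce $j_2$ must cancel, via an identity analogous to the partial-fraction expansion of $\operatorname{csch}$. Finally, the sign $-1$ in the $(3,3)$ position traces back to the half-period shift induced by $q^{1/2} \mapsto -q^{1/2}$ on the Lerch parameter $v$, and must be tracked consistently through the $\widehat\mu$-inversion and the subsequent $R$-subtraction. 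Once these identifications are in hand, the lemma follows by direct substitution.
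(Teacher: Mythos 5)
First, a point of orientation: the paper does not prove this lemma at all --- it is quoted from Watson \cite{Wa1936}, whose own argument proceeds by direct $q$-series manipulation and the evaluation of Mordell integrals, and the stated purpose of the paper is to prove Theorem \ref{thm:Zwthm} \emph{without} invoking this lemma. Your $T$-transformation bookkeeping is fine. Your plan for the $S$-transformation --- write each $f_j$ as an eta-quotient plus a $\mu$-term, complete to $\widetilde{\mu}$, apply the inversion formula, and peel off the corrections --- is viable in outline and is essentially the computation the paper carries out in Sections \ref{sec:mainproof}--\ref{sec:other} for the completed vector $H$. But as a proof of \emph{this} lemma it has a genuine gap at its center: you must show that the three $R(u;\tau)$-corrections you peel off assemble into Watson's vector of Mordell integrals $4\sqrt{3}\sqrt{-i\tau}\,(j_1,-j_2,j_3)^{T}$. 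That identification is exactly Zwegers' Lemma 3.3; it requires actually evaluating the period integrals $\int_0^{i\infty} g_{a,b}(z)\,(-i(z+\tau))^{-1/2}\,dz$ (equivalently the error-function series defining $R(u;\tau)$) as Gaussian integrals with $\sinh/\cosh$ kernels. You name this as ``the main obstacle'' but supply no argument for it, and it is the entire content of the lemma beyond bookkeeping.

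Second, your proposed mechanism for the second component is wrong. You argue that the cosine-kernel contribution ``that would otherwise produce $j_2$ must cancel,'' so that only $j_1$ and $j_3$ survive. No such cancellation occurs: the correct second entry of $R(\tau)$ is $-j_2(\tau)$, the cosine-kernel integral, and the $-j_1$ in the displayed statement is a typographical slip (note that $j_2$ is defined in the lemma and would otherwise never be used, and that the second component's correction comes from $g_0$, a different unary theta function from the one producing $j_1$). Hunting for the cancellation you posit would derail the proof; the correct task is to match each component's correction with its own Mordell integral. Finally, the representations of $2q^{1/3}\omega(\pm q^{1/2})$ as eta-quotient plus $\mu$-term are not free either: the paper derives them from Watson's Lerch-sum formula \eqref{eq:OMEGAWATSON} together with the crank identity \eqref{eq:CRANK} and a three-term splitting of the denominator, so calling them ``standard'' glosses over a step you would need to write out.
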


 Zwegers\cite{Zw2002} defined a new function which was the component-wise 
period integral of certain  theta functions of weight $3/2$.
  For $\tau \in \mathbb{H}
  \cup \mathbb{Q}$ he defined,
\begin{align*}
    G(\tau) := 2 i \sqrt{3}\int\limits_{-\overline{\tau}}^{ i \infty}\dfrac{(g_1(z),g_0(z),-g_2(z))^T}{\sqrt{- i (z+\tau)}}\,dz.
\end{align*}
     where
     \begin{align*}
g_0(z) &:= \sum\limits_{n\in\mathbb{Z}}(-1)^n(n+1/3)e^{3\pi i(n+1/3)^2z},\\
g_1(z) &:= -\sum\limits_{n\in\mathbb{Z}}(n+1/6)e^{3\pi i(n+1/6)^2z},\\
g_2(z) &:= \sum\limits_{n\in\mathbb{Z}}(n+1/3)e^{3\pi i(n+1/3)^2z}.
\end{align*}
In \cite[Lemma 3.3]{Zw2002} he proved that
\begin{align*}
    R(\tau)=-2i\sqrt{3}\int\limits_{0}^{i\infty}\frac{g(z)}{\sqrt{-i(z+\tau)}}dz
\end{align*}
where $g(z)=g_0(z),g_1(z),g_2(z))^T$ and used it to prove the following.
\begin{lemma}
For $\tau \in \mathbb{H}$, we have
\begin{align*}
    G(\tau + 1) &= 
    \begin{pmatrix}
        \zeta_{24}^{-1} & 0 & 0 \\
        0 & 0 & \zeta_{3} \\
        0 & \zeta_{3} & 0
    \end{pmatrix}
    G(\tau), \\
    \frac{1}{\sqrt{-i\tau}} G(-1/\tau) &=
    \begin{pmatrix}
        0 & 1 & 0 \\
        1 & 0 & 0 \\
        0 & 0 & -1
    \end{pmatrix}
    G(\tau) + R(\tau).
\end{align*}
\end{lemma}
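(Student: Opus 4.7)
I would verify the two identities separately by direct manipulation of the integral defining $G(\tau)$. For the translation $\tau\mapsto\tau+1$, the lower endpoint $-\overline{\tau+1}=-\bar\tau-1$ is restored to $-\bar\tau$ by the substitution $z\mapsto z-1$, which replaces each $g_j(z)$ in the integrand by $g_j(z-1)$. Expanding $(n+\alpha)^2=n^2+2\alpha n+\alpha^2$ in the exponent and using $e^{-3\pi i n^2}=(-1)^n$, a direct computation will give
\[
g_1(z-1)=\zeta_{24}^{-1}g_1(z),\qquad g_0(z-1)=-\zeta_3\,g_2(z),\qquad g_2(z-1)=-\zeta_3\,g_0(z).
\]
Rewriting $g_0(z-1)=\zeta_3\cdot(-g_2(z))$ and $-g_2(z-1)=\zeta_3\,g_0(z)$, the shifted integrand is exactly the asserted matrix applied to $(g_1,g_0,-g_2)^T$, which proves the first identity.

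For the inversion $\tau\mapsto -1/\tau$ I would substitute $z=-1/w$ in $G(-1/\tau)$; the endpoints $1/\bar\tau$ and $i\infty$ map to $-\bar\tau$ and $0$, $dz=dw/w^2$, and on the principal branch $\sqrt{-i(z-1/\tau)}=\sqrt{-i(w+\tau)}/\bigl(\sqrt{-iw}\,\sqrt{-i\tau}\bigr)$. Next I would invoke the weight-$\tfrac{3}{2}$ modular $S$-transformations
\[
g_0(-1/w)=-(-iw)^{3/2}g_1(w),\quad g_1(-1/w)=-(-iw)^{3/2}g_0(w),\quad g_2(-1/w)=(-iw)^{3/2}g_2(w),
\]
together with the algebraic identity $(-iw)^{3/2}\sqrt{-iw}/w^2=-1$. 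The factor $1/\sqrt{-i\tau}$ then cancels cleanly, and the integral collapses to
\[
\tfrac{1}{\sqrt{-i\tau}}\,G(-1/\tau)=2i\sqrt{3}\int_{-\bar\tau}^{0}\frac{(g_0(w),g_1(w),g_2(w))^T}{\sqrt{-i(w+\tau)}}\,dw.
\]
I would then split $\int_{-\bar\tau}^{0}=\int_{-\bar\tau}^{i\infty}-\int_{0}^{i\infty}$ along a contour in $\mathbb{H}$, which is permissible by the Gaussian decay of the $g_j$. Using $M(g_1,g_0,-g_2)^T=(g_0,g_1,g_2)^T$, the first integral equals $MG(\tau)$ and the second equals $-R(\tau)$ by the identity stated just before the lemma, yielding the second displayed equation.

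The main obstacle is the trio of weight-$\tfrac{3}{2}$ $S$-transformations of $g_0,g_1,g_2$. These come from Poisson summation applied to $xe^{3\pi ix^2w}$, with the dual sum grouped by residues modulo $6$ and the resulting Gauss sums evaluated explicitly (for example, an illustrative calculation for $g_2$ reduces to the identity $\zeta_3-\zeta_3^2=i\sqrt{3}$, which supplies exactly the factor needed to cancel the $1/(3\sqrt{3})$ coming from Poisson). The cross-couplings in the other two transformations arise by splitting the dual sum modulo $6$ rather than modulo $3$; this is ultimately why $G(\tau)$, built from the same theta triple appearing in Watson's remainder $R(\tau)$, produces a self-consistent vector-valued transformation.
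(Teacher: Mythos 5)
Your argument is correct, and it is essentially the proof the paper is pointing to: the paper states this lemma without proof, quoting Zwegers, and Zwegers' own derivation is exactly your route — shift $z\mapsto z-1$ for the $T$-identity, substitute $z=-1/w$ and apply the weight-$\tfrac{3}{2}$ inversion formulas for $g_0,g_1,g_2$ (which are Proposition \propo{gabprops}(v) specialized via $g_2(z)=g_{1/3,0}(3z)$, etc., so you need not rederive them by Poisson summation), and then split $\int_{-\bar\tau}^{0}=\int_{-\bar\tau}^{i\infty}-\int_{0}^{i\infty}$ and invoke the displayed formula $R(\tau)=-2i\sqrt{3}\int_{0}^{i\infty}g(z)\,dz/\sqrt{-i(z+\tau)}$. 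Your branch bookkeeping ($\mathrm{Re}(-iw)$, $\mathrm{Re}(-i\tau)$, $\mathrm{Re}(-i(w+\tau))$ all positive) and the convergence of the split at $w=0$ (the $g_j$ have nonzero characteristic $a$, hence decay at both cusps) are both sound.
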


This result together with \ref{lem:watsontrans} implies,

\begin{theorem}[{ Zwegers \cite[Theorem 3.6]{Zw2002} }]
\mylabel{thm:Zwthm}
The function $H$ defined by
\[
H(\tau) := F(\tau) - G(\tau) = (h_0(\tau), h_1(\tau), h_2(\tau))
\]
is a vector-valued real analytic modular form of weight $1/2$ satisfying
\begin{align}
H(\tau + 1) &= 
\begin{pmatrix} 
\zeta_{24}^{-1} & 0 & 0\\ 
0 & 0 & \zeta_3\\ 
0 & \zeta_3 & 0
\end{pmatrix} H(\tau), \mylabel{eq:T-transformation} \\
\frac{1}{\sqrt{-i\tau}} H\left(-\frac{1}{\tau}\right) &= 
\begin{pmatrix} 
0 & 1 & 0\\ 
1 & 0 & 0\\ 
0 & 0 & -1
\end{pmatrix} H(\tau). \mylabel{eq:S-transformation}
\end{align}

\end{theorem}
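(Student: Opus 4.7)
The plan is to deduce Theorem \ref{thm:Zwthm} directly from Lemma \ref{lem:watsontrans} and the preceding lemma on $G$ by componentwise subtraction. Since both $F$ and $G$ transform under $\tau \mapsto \tau+1$ by the same diagonal matrix — call it $M_T$ — with no inhomogeneous correction term, I would first write
\begin{align*}
H(\tau+1) = F(\tau+1) - G(\tau+1) = M_T F(\tau) - M_T G(\tau) = M_T H(\tau),
\end{align*}
which is \eqref{eq:T-transformation}.

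The $S$-transformation is equally short once one notices that Watson's formula and Zwegers' formula produce \emph{the same} inhomogeneous term $R(\tau)$. Writing $M_S$ for the common $S$-matrix appearing in both lemmas, I would subtract
\begin{align*}
\frac{1}{\sqrt{-i\tau}} F(-1/\tau) &= M_S F(\tau) + R(\tau),\\
\frac{1}{\sqrt{-i\tau}} G(-1/\tau) &= M_S G(\tau) + R(\tau),
\end{align*}
to obtain $\frac{1}{\sqrt{-i\tau}} H(-1/\tau) = M_S H(\tau)$, which is \eqref{eq:S-transformation}.

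For the qualitative assertions, each component of $F$ is holomorphic on $\mathbb{H}$, being given by an absolutely convergent $q$-series, while $G$ is real analytic because the integral defining it depends on $\overline{\tau}$ through its lower limit $-\overline{\tau}$. Hence $H = F - G$ is real analytic but in general not holomorphic, and the factor $\sqrt{-i\tau}$ in \eqref{eq:S-transformation} records the modular weight $1/2$.

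The main obstacle is not located in this theorem itself — which, as sketched, is a one-line consequence of the two preceding lemmas — but in justifying that the $R(\tau)$ terms really do coincide. That in turn rests on Zwegers' identity $R(\tau) = -2i\sqrt{3}\int_{0}^{i\infty} g(z)/\sqrt{-i(z+\tau)}\,dz$ recalled above from \cite[Lemma 3.3]{Zw2002}, which rewrites Watson's Mordell-type integrals $j_1, j_2, j_3$ as a theta period integral and thereby matches the two occurrences of $R(\tau)$. Once that identity is granted, Theorem \ref{thm:Zwthm} follows from the subtraction above with no further computation.
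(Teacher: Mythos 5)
Your subtraction argument is mathematically sound --- it is in fact Zwegers' original derivation, and the paper itself concedes in the introduction that Lemma~\ref{lem:watsontrans} together with the lemma on $G$ ``implies'' Theorem~\thm{Zwthm}. But it is not the route the paper takes, and the difference is the whole point of the paper. Your proof of \eqn{S-transformation} leans entirely on Watson's transformation formulae for $f(q)$ and $\omega(q)$ (Lemma~\ref{lem:watsontrans}), which is precisely the ingredient the authors set out to avoid: as they note, Zwegers' proof of \eqn{S-transformation} ``depends crucially'' on Watson's result, whose own proof via the Mordell-type integrals $j_1,j_2,j_3$ is the genuinely hard step that your sketch relegates to a cited black box. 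The paper instead proves \eqn{S-transformation} component by component using only the Appell--Lerch machinery of Chapter~1 of Zwegers' thesis: each $h_j$ is rewritten as an explicit eta-quotient plus a completed Appell--Lerch term $\mutwid$ (Theorem~\thm{h2thm} and its analogues in Section~\sect{other}), the modular transformation of $\mutwid$ (Proposition~\propo{mutwidprops}(b)) together with Lemma~\lem{Rext} is applied, and the resulting equality is shown to be equivalent to the new $q$-series identities \eqn{NEWOMEGA}, \eqn{NEWOMEGA2}, \eqn{NEWF}, which are then established by $3$-dissections of eta-quotients and of the relevant Lerch sums (Lemmas~\lem{eta3diss} and~\lem{mudiss}). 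Your approach buys a one-line deduction at the price of assuming Watson's transformations; the paper's approach buys independence from Watson --- and, as a by-product, the three new identities --- at the price of substantial $q$-series computation. One small additional caution about your version: the claim that the two inhomogeneous terms ``coincide'' requires the identification of Watson's vector of Mordell integrals with the period integral $-2i\sqrt{3}\int_0^{i\infty} g(z)/\sqrt{-i(z+\tau)}\,dz$, which is a nontrivial computation (Zwegers' Lemma~3.3) and should not be waved through.
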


One can observe that  \eqn{T-transformation} is easy to prove component-wise, however proving \eqn{S-transformation} is more difficult. Zwegers proof of \eqn{S-transformation} depends crucially on Watson's transformation result Lemma\ref{lem:watsontrans}. In this paper we give a new proof of \eqn{S-transformation} that is independent of Watson's transformation formulae for $f(q)$ and $\omega(q)$ in Lemma~\ref{lem:watsontrans}.
Our proof of Theorem \thm{Zwthm} uses only properties of Appell-Lerch 
sums from Chapter 1 of Zwegers' thesis\cite{Zw-thesis}. In the process of the proof we discover the following new $q$-series identities:
\begin{align}   
2q^{2}\omega(-q^3) &= -\dfrac{2i}{\sqrt{3}} - \dfrac{2}{3}\dfrac{\eta(\tau)^2\eta(4\tau)^2}{\eta(2\tau)^2\eta(6\tau)} +\dfrac{4}{3}\frac{e^{\pi i/3}}{E(q^6)}\sum\limits_{n \in \mathbb{Z}}\frac{(-1)^{n}\zeta_{3}^{n}q^{n^2+n}}{1+q^{2n+1}},
\mylabel{eq:NEWOMEGA}
\\
    2q^{2} \omega(q^{3})
    &=- \frac{2i}{\sqrt{3}}+\frac{2}{3} \frac{\eta(2\tau)^4}{\eta(6\tau)\eta(\tau)^2} 
    + \frac{4}{3} \frac{e^{- \pi i/3}}{E(q^6)}\sum\limits_{n \in \mathbb{Z}}\frac{(-1)^{n}\zeta_{3}^{2n}q^{n^2+n}}{1-\zeta_{3}q^{2n+1}}
    ,\mylabel{eq:NEWOMEGA2}\\
  q^{\frac{-1}{8}}f(q^3)&=\frac{1}{3}\frac{\eta(\tau)^{4}}{\eta(3\tau)\eta(2\tau)^{2}}-\frac{4}{3}\frac{q^{-1/8}}{E(q^3)}\sum\limits_{n \in \mathbb{Z}}\frac{(-1)^{n}\zeta_{3}^{n}q^{\frac{n^2+n}{2}}}{1+q^{2n}},
\mylabel{eq:NEWF}
\end{align}    
 where $\zeta_{n}=\exp(2\pi i/n)$, $E(q)=\prod_{n=1}^{\infty}(1-q^n)$ and $\eta(\tau)=q^{1/24}E(q)$
In Section \sect{mainproof}  we show that the third component of 
\eqn{S-transformation} reduces to \eqn{NEWOMEGA} and then prove 
\eqn{NEWOMEGA} by calculating $3$-dissections of certain eta-quotients 
and Appell-Lerch sums.
In Section \sect{other} we show the first component of 
\eqn{S-transformation} reduces to \eqn{NEWOMEGA2} and prove 
\eqn{NEWOMEGA2} by replacing $q$ by $-q$ in \eqn{NEWOMEGA}. This also 
involves properties of Appell-Lerch sums. This will complete the proof of 
\eqn{S-transformation} since its second component follows easily 
from the first. As a corollary we deduce \eqn{NEWF}.

\section{Preliminary Results}
\mylabel{sec:prelim}
Throughout the paper we will use the following standard notation.
For a non-negative integer $L$ we define the conventional $q$-Pochammer 
symbol as
\begin{align*} 
    (a)_L = (a;q)_L &:= \prod_{k=0}^{L-1}(1-aq^k),\\
    (a)_{\infty} = (a;q)_{\infty} 
   &:= \lim_{L\rightarrow\infty}(a)_L\,\,\text{where}\,\,\lvert q\rvert<1.  
\end{align*}
Next we define
\begin{align*}
    E(q) &:= (q)_{\infty}.
\end{align*}
The Dedekind eta function is given by
\begin{align*}
 \eta(\tau) 
 &= e^{\frac{2 \pi i \tau }{24}}\prod_{n=1}^{\infty}(1-e^{2\pi i n \tau})=q^{\frac{1}{24}}E(q),
\end{align*}
where $\tau\in\mathbb{H}$ (the complex upper-half plane) and satisfies
\begin{equation}
 \eta\left(\frac{-1}{\tau}\right)=\sqrt{-i\tau}\eta(\tau)
\mylabel{eq:Etatrans}
\end{equation}
 
\subsection{Zwegers' Completion of Appell-Lerch Series}
\mylabel{subsec:appell}

We follow Chapter 1 from Zwegers' thesis \cite{ZP2002} and define some functions and state some of their properties. 
\begin{equation*}
\vartheta(z,\tau) := 
  \sum\limits_{n\in\frac{1}{2}+\mathbb{Z}}
   e^{\pi i n^2\tau+2\pi i n(z+\frac{1}{2})},
\end{equation*}
for $z\in\mathbb{C}$, and $\tau\in\mathbb{H}$. 
We have the Jacobi triple product identity \cite[p.8]{Zw-thesis}
\begin{equation*}
\vartheta(z;\tau)
=-iq^{\frac{1}{8}}\zeta^{-\frac{1}{2}}
   \prod_{n=1}^{\infty} (1-q^n)(1-\zeta q^{n-1})(1-\zeta^{-1}q^{n}), 
\end{equation*}
where $q= e^{2 \pi i \tau}$ and $\zeta= e^{2 \pi i z}$.\\
We also use another version of the Jacobi's triple product dentity as follows:\\
\begin{equation}
\left(q^2, qz, q/z; q^2\right)_\infty 
= \sum_{n=-\infty}^{\infty} (-1)^n z^n q^{n^2}. 
\mylabel{eq:JTP}
\end{equation}

For  $u,v\in\mathbb{C}\setminus\mathbb{Z}\tau+\mathbb{Z}$ and 
$\tau \in \mathbb{H}$ we define the Appell-Lerch series
\begin{equation*}
\mu(u,v;\tau) = \frac{e^{\pi i u}}{\vartheta(v;\tau)}
\sum\limits_{n\in\mathbb{Z}}
\dfrac{(-1)^ne^{\pi i(n^2+n)\tau+2\pi i nv}}{1-e^{2\pi i n\tau+2\pi i u}}.
\end{equation*}

Zwegers also defined:
\begin{equation*}
 R(u;\tau):=\sum\limits_{n \in 1/2+\mathbb{Z}}
    \{sgn(n)-E\left((n+a)\sqrt{2y}\right)\}
      (-1)^{n-1/2}e^{-\pi i n^2 \tau- 2\pi i nu},
\end{equation*}
where $u\in\mathbb{C}$, $\tau\in\mathbb{H}$, $y=Im(\tau)$, 
$a=\dfrac{Im(u)}{Im(\tau)}$ and for  $z \in \mathbb{C}$, $E(z)$ is defined by
\begin{equation*}
    E(z):=2\int\limits_{0}^{z}e^{-\pi u^2}du
         =\sum\limits_{n=0}^{\infty}\dfrac{(-\pi)^n z^{2n+1}}{n!(n+1/2)}.
\end{equation*}
For $z\in\mathbb{R}$
\begin{equation*}
    E(z)=sgn(z)(1-\beta(z^2)),
\end{equation*}
where
\begin{equation*}
    \beta(x)=\int\limits_{x}^{\infty}u^{-1/2}e^{-\pi u}du
\end{equation*}
 for $x\ge 0$.

The function $R$ has elliptic transformation properties.
\begin{prop}
\mylabel{propo:Rellprops}
We have
\begin{enumerate}
    \item[\normalfont(a)] 
    \mylabel{prop:Rellprops:a}
    $\displaystyle R(u+1)=-R(u)$,
    
    \item[\normalfont(b)] 
    \mylabel{prop:Rellprops:b}
    $\displaystyle R(u)+ e^{-2\pi iu -\pi i \tau}R(u+\tau)
     =2e^{-\pi i u-\pi i \tau/4}$,
    
    \item[\normalfont(c)] 
    \mylabel{prop:Rellprops:c}
    $\displaystyle R(-u)=R(u)$.
\end{enumerate}
\end{prop}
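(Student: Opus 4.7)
The plan is to prove all three parts directly from the definition of $R(u;\tau)$, treating them as identities between bilateral sums indexed by $n \in \tfrac{1}{2}+\mathbb{Z}$. The key observation is that in each case, replacing $u$ by the transformed argument changes the parameter $a = \operatorname{Im}(u)/\operatorname{Im}(\tau)$ in a controlled way, and then a change of summation variable produces the desired identity term by term.

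For part (a), I would note that $\operatorname{Im}(u+1) = \operatorname{Im}(u)$, so $a$ is unchanged and $E((n+a)\sqrt{2y})$ is unchanged. The only effect of $u \mapsto u+1$ is to multiply the summand by $e^{-2\pi i n}$, which equals $-1$ for every $n \in \tfrac{1}{2}+\mathbb{Z}$. This gives $R(u+1) = -R(u)$ immediately. For part (c), $u \mapsto -u$ sends $a \mapsto -a$; then substituting $n \mapsto -n$ in the sum uses three facts: $\operatorname{sgn}$ is odd, $E$ is odd, and $(-1)^{-n-1/2} = -(-1)^{n-1/2}$ for $n \in \tfrac{1}{2}+\mathbb{Z}$. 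Three sign flips cancel, and the $e^{-\pi i n^2\tau}$ factor is invariant, leaving $R(-u) = R(u)$.

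Part (b) is the main step and requires a genuine index shift. Under $u \mapsto u+\tau$ the parameter becomes $a+1$, and the prefactor $e^{-2\pi i u - \pi i \tau}$ absorbs precisely the cross terms needed to complete the square: combining the exponentials gives
\begin{equation*}
e^{-2\pi i u - \pi i \tau}\, e^{-\pi i n^2 \tau - 2\pi i n(u+\tau)} = e^{-\pi i (n+1)^2 \tau - 2\pi i (n+1) u}.
\end{equation*}
Reindexing by $m = n+1$ (which preserves the summation set $\tfrac{1}{2}+\mathbb{Z}$) converts $E((n+a+1)\sqrt{2y})$ into $E((m+a)\sqrt{2y})$ and $\operatorname{sgn}(n)$ into $\operatorname{sgn}(m-1)$, while the factor $(-1)^{n-1/2}$ picks up an extra sign. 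Adding to $R(u)$, the $E$-terms cancel and one is left with
\begin{equation*}
R(u) + e^{-2\pi i u - \pi i \tau} R(u+\tau) = \sum_{m \in \frac{1}{2}+\mathbb{Z}} \bigl(\operatorname{sgn}(m) - \operatorname{sgn}(m-1)\bigr)(-1)^{m-1/2} e^{-\pi i m^2 \tau - 2\pi i m u}.
\end{equation*}
The difference $\operatorname{sgn}(m) - \operatorname{sgn}(m-1)$ is $2$ at $m = \tfrac{1}{2}$ and zero for all other half-integers, so only one term survives, namely $2 e^{-\pi i \tau/4 - \pi i u}$, which is exactly the right-hand side.

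The main obstacle is part (b): one must track the shift carefully so that the exponent recombines as a perfect square, and then recognize that the surviving expression is a telescoping collapse of $\operatorname{sgn}$ rather than of $E$. Parts (a) and (c) are essentially symmetry arguments that require no analytic input beyond the oddness of $\operatorname{sgn}$ and $E$.
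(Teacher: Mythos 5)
Your argument is correct, and it is essentially the standard proof: the paper itself states this proposition without proof (it is imported from Chapter~1 of Zwegers' thesis), and the direct computation you give --- term-by-term for (a) and (c), and the index shift $m=n+1$ with the telescoping of $\operatorname{sgn}(m)-\operatorname{sgn}(m-1)$ at $m=\tfrac12$ for (b) --- is exactly how Zwegers proves it. One small bookkeeping remark on (c): the oddness of $\operatorname{sgn}$ and of $E$ act on the two separate terms inside the brace, so together they produce a \emph{single} overall sign on the factor $\{\operatorname{sgn}(n)-E((n+a)\sqrt{2y})\}$; combined with $(-1)^{-n-1/2}=-(-1)^{n-1/2}$ this gives two multiplicative sign changes, which cancel --- an odd number of genuine sign flips could not cancel, so the phrase ``three sign flips cancel'' should be restated, though the facts you invoke and the conclusion $R(-u)=R(u)$ are correct.
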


Zwegers was able to complete the Appell-Lerch function $\mu(u,v,\tau)$ using $R(u;\tau)$ so that the completed function has elliptic and modular transformation properties.
\begin{prop}
\mylabel{propo:mutwidprops}
We define
\[
    \mutwid(u,v;\tau) = \mu(u,v;\tau) + \frac{i}{2}R(u-v;\tau).
\]
Then
\begin{enumerate}[\normalfont(a)]
    \item $\mutwid(u+k\tau+l,\,v+m\tau+n)
    = (-1)^{k+l+m+n} e^{\pi i (k-m)^{2}\tau + 2\pi i (k-m)(u-v)}\mutwid(u,v)$\\ for $k,l,m,n \in \mathbb{Z}$,

    \item  \[
    \mutwid\!\left(\frac{u}{c\tau+d},\,\frac{v}{c\tau+d};\,\frac{a\tau+b}{c\tau+d}\right)
    = V(\gamma)^{-3} \sqrt{c\tau+d}\,
    e^{-\pi i c(u-v)^2/(c\tau+d)}\,\mutwid(u,v;\tau),
    \]
    for $\gamma =
    \begin{pmatrix}
        a & b \\ c & d
    \end{pmatrix}
    \in SL_{2}(\mathbb{Z})$
    and $V(\gamma)
    = \dfrac{\eta(\gamma \tau)}{\sqrt{c\tau+d}\,\eta(\tau)}$,

    \item $\mutwid(-u,-v) = \mutwid(v,u) = \mutwid(u,v)$.
\end{enumerate}
\end{prop}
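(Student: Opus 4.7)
The three parts have quite different character, and the plan is to treat them separately. For part (a), the strategy is to reduce the general lattice shift to the four generating moves $(u,v)\mapsto(u+1,v),\,(u,v+1),\,(u+\tau,v),\,(u,v+\tau)$ and verify each by direct computation. Integer shifts produce an overall sign from $\vartheta(z+1;\tau)=-\vartheta(z;\tau)$ together with Proposition~\ref{propo:Rellprops}(a). For the $\tau$-shifts, one reindexes $n\mapsto n\mp 1$ in the defining series of $\mu$; the summation picks up exactly the Gaussian factor $e^{\pi i\tau+2\pi i(u-v)}$ appearing in the claimed cocycle, plus a residual term arising from the shifted $n=0$ summand that is precisely cancelled by the correction $\tfrac{i}{2}R(u-v)$ via Proposition~\ref{propo:Rellprops}(b). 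Iterating over these four generators gives the general elliptic law, with the signs coming from $(-1)^{k+l+m+n}$ by the integer shifts.

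For part (b), since $SL_2(\mathbb{Z})=\langle T,S\rangle$ with $T=\left(\begin{smallmatrix}1&1\\0&1\end{smallmatrix}\right)$ and $S=\left(\begin{smallmatrix}0&-1\\1&0\end{smallmatrix}\right)$, and since the multiplier $V(\gamma)^{-3}\sqrt{c\tau+d}\,e^{-\pi i c(u-v)^2/(c\tau+d)}$ itself satisfies a cocycle identity under matrix composition, it suffices to verify the formula on $T$ and $S$. The $T$-case is routine: one reads off the effect of $\tau\mapsto\tau+1$ on the series defining $\mu$ and on each term of $R$, and compares against $V(T)^{-3}=\zeta_{24}^{-1}$. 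The $S$-case is the crux and the main obstacle. Following Zwegers, the plan is to express both $\mu(u/\tau,v/\tau;-1/\tau)$ and $R((u-v)/\tau;-1/\tau)$ as Mordell-type contour integrals against the weight-$\tfrac{3}{2}$ theta kernels $g_j$. Each separately contains a non-holomorphic remainder that is not of the required form, but the relative factor $\tfrac{i}{2}$ between $R$ and $\mu$ is precisely what is needed for the two remainders to cancel, leaving the expression $V(S)^{-3}\sqrt{-i\tau}\,e^{-\pi i(u-v)^2/\tau}\mutwid(u,v;\tau)$. The difficulty is in managing the contour rotation, the Gaussian integrations, and in tracking the $8$th-root-of-unity multiplier arising from $V(S)^{-3}$ and the theta-function transformation.

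For part (c), both symmetries reduce, via the evenness $R(-w;\tau)=R(w;\tau)$ from Proposition~\ref{propo:Rellprops}(c), to the corresponding identities for $\mu$ itself: $\mu(-u,-v;\tau)=\mu(u,v;\tau)$ and $\mu(v,u;\tau)=\mu(u,v;\tau)$. The first is immediate from the substitution $n\mapsto -n$ in the defining sum combined with $\vartheta(-v;\tau)=-\vartheta(v;\tau)$. The second is the classical symmetry of the Appell--Lerch series; the plan is to observe that $\mu(u,v)-\mu(v,u)$, viewed as a meromorphic function of $u$ with $v$ fixed, has simple poles only on the lattice $\mathbb{Z}\tau+\mathbb{Z}$, but the residues cancel between the two terms, so the difference is entire. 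It inherits the same elliptic $\tau$-quasi-periodicity as $\mu(u,v)$ itself, and a standard theta-function Liouville argument then forces the difference to vanish identically (after checking one base value, or after invoking the symmetry in $v$).
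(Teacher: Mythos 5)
The paper does not actually prove this proposition: it is quoted from Chapter~1 of Zwegers' thesis (his Propositions 1.4, 1.5, 1.9 and Theorem 1.11), so there is no in-paper argument to compare against. Your outline faithfully reconstructs Zwegers' own proof --- reduction to the four lattice generators in (a), reduction to the generators $T,S$ of $SL_2(\mathbb{Z})$ with the Mordell-integral cancellation between the $\mu$- and $R$-remainders for the $S$-case in (b), and the $n\mapsto -n$ substitution plus the residue/Liouville argument for the symmetry $\mu(u,v)=\mu(v,u)$ in (c) --- and is correct in all essentials. One small mechanical caveat on (a): the inhomogeneous term produced by the shift $u\mapsto u+\tau$ is not a single stray summand from $n=0$ but a full theta series $\sum_{m}(-1)^m e^{\pi i(m^2-m)\tau+2\pi i m v}$ arising from the splitting $e^{-2\pi i m\tau}/(1-X)=e^{2\pi i u}\bigl(X^{-1}+(1-X)^{-1}\bigr)$ with $X=e^{2\pi i m\tau+2\pi i u}$; it collapses to $i e^{-\pi i\tau/4+\pi i v}\vartheta(v;\tau)$ by the triple product, and only then cancels against Proposition~\ref{propo:Rellprops}(b).
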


Zwegers related the function $R(u;\tau)$ to the weight $3/2$ defined the 
unary theta functions:
\begin{align*}
 g_{a,b}(\tau):= \sum\limits_{n\in a+ \mathbb{Z}}ne^{\pi i n^2\tau+2\pi i n b}
\end{align*}
for $a,b \in \mathbb{R}, \tau \in \mathbb{H}$
\begin{prop}
\mylabel{propo:gabprops}
We have 
\begin{enumerate}[\normalfont(i)]
    \item $g_{a+1,b}(\tau)$=$g_{a,b}(\tau)$,
    \item $g_{a,b+1}(\tau)$=$e^{2\pi i a}g_{a,b}(\tau)$,
    \item $g_{-a,-b}(\tau)$=$-g_{a,b}(\tau)$,
    \item$ g_{a,b}(\tau+1)=e^{- \pi i a(a+1)}g_{a,a+b+\frac{1}{2}}$,
\item $ g_{a,b}(\dfrac{-1}{\tau})=i e^{2\pi i ab}(-i\tau)^{3/2}g_{b,-a}(\tau)$\mylabel{gabinverse}.

\end{enumerate}    
\end{prop}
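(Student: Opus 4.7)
Properties (i)--(iv) are essentially direct computations at the level of the defining series; only (v) requires genuine analysis, via Poisson summation. I would treat (i)--(iv) briefly in sequence, and then devote the bulk of the argument to (v).

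\textbf{Parts (i)--(iv).} For (i) the sets $a+1+\mathbb{Z}$ and $a+\mathbb{Z}$ coincide, so the series are literally identical. For (ii) I note that $e^{2\pi i n(b+1)} = e^{2\pi i n b}\cdot e^{2\pi i n}$, and for every $n \in a+\mathbb{Z}$ the factor $e^{2\pi i n}$ equals the constant $e^{2\pi i a}$, which pulls out of the sum. For (iii) the substitution $n \mapsto -n$ is a bijection $-a+\mathbb{Z}\to a+\mathbb{Z}$; it preserves $n^{2}\tau$, turns $2\pi i n(-b)$ into $2\pi i n b$, and produces the overall minus sign from the prefactor $n$. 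For (iv) write $n=a+m$ with $m\in\mathbb{Z}$. The only new ingredient from $\tau\mapsto\tau+1$ is the factor $e^{\pi i n^{2}}$, which I expand as $e^{\pi i a^{2}}e^{2\pi i a m}e^{\pi i m^{2}}$ and then use $m^{2}\equiv m\pmod 2$ to replace $e^{\pi i m^{2}}$ by $e^{\pi i m}$. Repackaging these phases in terms of $n$ gives precisely $e^{-\pi i a(a+1)}e^{2\pi i n(a+1/2)}$, which is the shift of the $b$-parameter to $a+b+\tfrac{1}{2}$ together with the claimed scalar.

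\textbf{Part (v).} I apply Poisson summation to
\[
h(x) := (a+x)\exp\!\Bigl(-\tfrac{\pi i (a+x)^{2}}{\tau}+2\pi i (a+x)b\Bigr),
\]
so that $g_{a,b}(-1/\tau)=\sum_{m\in\mathbb{Z}}h(m)=\sum_{k\in\mathbb{Z}}\widehat{h}(k)$. Inside $\widehat{h}(k)$ I change variables $y=a+x$, which strips off a factor $e^{2\pi i k a}$ and leaves the classical integral
\[
\int_{-\infty}^{\infty} y\, e^{-\pi i y^{2}/\tau + 2\pi i y(b-k)}\,dy.
\]
Completing the square in the exponent produces $-\pi i (y-(b-k)\tau)^{2}/\tau + \pi i (b-k)^{2}\tau$; the shifted Gaussian integrates to $\sqrt{-i\tau}$, its odd part vanishes, and what remains is a linear-in-$(b-k)$ factor. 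Reindexing by $n = b-k \in b+\mathbb{Z}$ rewrites the Poisson sum as $e^{2\pi i ab}\,\tau\sqrt{-i\tau}\,g_{b,-a}(\tau)$, and the trivial rewriting $\tau\sqrt{-i\tau}=i(-i\tau)^{3/2}$ gives the desired form.

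\textbf{Main obstacle.} The only genuinely delicate point is the Fourier transform step in (v): one must justify the Gaussian integral for $\tau\in\mathbb{H}$ (so that $\operatorname{Re}(-i/\tau)<0$ guarantees absolute convergence after the shift of contour) and fix the branch of $\sqrt{-i\tau}$ consistently with $\sqrt{-i\tau}>0$ for $\tau$ on the positive imaginary axis. With that branch convention, the factor $i(-i\tau)^{3/2}$ in (v) comes out with the correct sign; any other choice would flip the overall sign or introduce a spurious factor of $i$.
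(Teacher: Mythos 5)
Your proof is correct: (i)--(iv) are indeed immediate from the series definition (your phase bookkeeping in (iv), using $m^{2}\equiv m \pmod 2$, checks out and yields exactly $e^{-\pi i a(a+1)}$ and the shift $b\mapsto a+b+\tfrac12$), and your Poisson-summation argument for (v), with the completed square, the vanishing odd part, the reindexing $n=b-k$, and the branch convention $\sqrt{-i\tau}>0$ on the imaginary axis, produces the stated factor $ie^{2\pi i ab}(-i\tau)^{3/2}$. The paper itself offers no proof of this proposition --- it is quoted from Chapter 1 of Zwegers' thesis --- and your Poisson-summation route is essentially the standard argument given there, so there is nothing to reconcile.
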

\begin{theorem}
\mylabel{thm:gabints}
Let $\tau$ $\in$ $\mathbb{H}$. For $a \in (-1/2,1/2)$ and $b \in \mathbb{R}$.
Then
\begin{align*}
\int\limits_{-\overline{\tau}}^{i\infty}\dfrac{g_{a+1/2,b+1/2}(z)\,dz}{\sqrt{i(z+\tau)}} = -e^{- \pi i a^2 \tau + 2 \pi i a (b+1/2)}R(a\tau-b;\tau). 
\end{align*} 
\end{theorem}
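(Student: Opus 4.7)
My plan is to interchange sum and integral, evaluate each resulting Gaussian in terms of the function $\beta$, and then independently reduce $R(a\tau-b;\tau)$ to the same form. First I would substitute $w = z + \tau$, which moves the branch point to $w=0$ and turns the contour into the vertical ray $[2iy,\,i\infty)$ with $y = \IM(\tau)$. Expanding $g_{a+1/2,b+1/2}(w-\tau)$ term by term and swapping sum and integral (the Gaussian factor decays super-exponentially in both $n$ and $t$, so this is routine) converts the left-hand side into
\begin{align*}
\sum_{n\in a+1/2+\mathbb{Z}} n\,e^{-\pi i n^2 \tau + 2\pi i n(b+1/2)} \int_{2iy}^{i\infty}\frac{e^{\pi i n^2 w}}{\sqrt{-iw}}\,dw.
\end{align*}
Parametrizing $w = it$ with $t \in [2y,\infty)$ reduces each inner integral to $i\int_{2y}^{\infty} t^{-1/2} e^{-\pi n^2 t}\,dt$, and the further substitution $u = n^2 t$ identifies it with $i\,\beta(2yn^2)/\lvert n\rvert$. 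The hypothesis $a\in(-1/2,1/2)$ guarantees that no summation index equals zero, so $n/\lvert n\rvert = \sgn(n)$ is well defined throughout.

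For the right-hand side I would reindex $R(a\tau-b;\tau)$ by setting $m = n + a$, so $m$ ranges over $a+1/2+\mathbb{Z}$. The real-variable identity $E(x) = \sgn(x)(1-\beta(x^2))$ collapses each bracket $\sgn(m)-E(m\sqrt{2y})$ to $\sgn(m)\beta(2ym^2)$, and expanding $(m-a)^2\tau + 2(m-a)(a\tau-b)$ in the exponent separates the $m$-dependent and $m$-independent pieces to give
\begin{align*}
R(a\tau-b;\tau) = e^{\pi i a^2 \tau - 2\pi i a b}\sum_{m\in a+1/2+\mathbb{Z}} \sgn(m)\,\beta(2ym^2)\,(-1)^{m-a-1/2}\,e^{-\pi i m^2 \tau + 2\pi i m b}.
\end{align*}

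To match the two expressions I would use that $e^{\pi i n} = i\,e^{\pi i a}(-1)^{n-a-1/2}$ for every $n \in a+1/2+\mathbb{Z}$, which rewrites the phase $e^{2\pi i n(b+1/2)}$ from the integral side as $i\,e^{\pi i a}(-1)^{n-a-1/2} e^{2\pi i n b}$. Combining this with the outer factor $i$ from the Gaussian evaluation produces $i\cdot i = -1$, and the remaining exponentials assemble into precisely $-e^{-\pi i a^2 \tau + 2\pi i a(b+1/2)} R(a\tau-b;\tau)$. The main delicate point will be the branch choice for $\sqrt{-i(z+\tau)}$: I take the principal branch, so that $\sqrt{-iw} = \sqrt{t}$ when $w = it$ with $t>0$, and this convention is what ultimately produces the overall minus sign on the right.
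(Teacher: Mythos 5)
Your proposal is correct, and it is essentially the same argument the paper uses: the paper quotes this theorem from Zwegers and gives the identical computation in full for the boundary case $a=-1/2$ (Lemma \ref{lem:Rext}, ``we proceed as in Zwegers' proof''), namely shifting the contour to the ray $[2iy,i\infty)$, interchanging sum and integral, reducing each Gaussian to $i\,\beta(2yn^2)/\lvert n\rvert$, and reassembling via $\sgn(x)\beta(x^2)=\sgn(x)-E(x)$ and the half-integer phase identity. Your reading of the kernel as $\sqrt{-i(z+\tau)}$ (rather than the $\sqrt{i(z+\tau)}$ printed in the statement) is the correct normalization, consistent with the rest of the paper and with Zwegers' thesis.
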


We need to extend this theorem to the case $a=-1/2$. 
This extension was observed by Kang \cite[Lemma 2.1]{Ka2009}
and Jennings-Shaffer \cite[p.338]{JS2016}. We prove the result in detail.

\begin{lemma}\mylabel{lem:Rext}
   Let  $\tau \in \mathbb{H}$ and $b \in \mathbb{R}$. Then 
   \begin{equation*}
R(-\tau/2-b;\tau)
=e^{\pi i \tau/4 + \pi i b}
-e^{\pi i \tau/4 + \pi i(b+1/2)}
\int_{-\overline{\tau}}^{i\infty}
\dfrac{g_{0,b+1/2}(z)}
      {\sqrt{-i(z+\tau)}}\,dz.
\end{equation*} \mylabel{lem:lemmaG}
\end{lemma}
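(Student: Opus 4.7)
The plan is to derive this identity as the boundary case of Theorem \ref{thm:gabints} as $a \to -1/2^{+}$. Formally setting $a = -1/2$ would yield an integral involving $g_{0,b+1/2}$, but this substitution misses a Gaussian boundary contribution. The reason is that the $n = a+1/2$ term of $g_{a+1/2,b+1/2}(z)$ carries a prefactor $(a+1/2) \to 0$, yet the corresponding integrand $e^{\pi i(a+1/2)^{2} z}/\sqrt{-i(z+\tau)}$ becomes non-integrable in that limit, producing a divergence of order $1/(a+1/2)$. The two effects cancel to leave a finite residue that furnishes the constant term $e^{\pi i\tau/4 + \pi i b}$ in the identity.

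First I would split
\[
g_{a+1/2,b+1/2}(z) = (a+1/2)\,e^{\pi i(a+1/2)^{2}z + 2\pi i(a+1/2)(b+1/2)} + h_{a}(z),
\]
where $h_{a}(z)$ collects the terms with $n \in a+1/2+\mathbb{Z}$, $n \neq a+1/2$. The exponents in $h_{a}$ stay bounded away from zero uniformly for $a$ near $-1/2$, so dominated convergence gives
\[
\int_{-\overline{\tau}}^{i\infty} \frac{h_{a}(z)}{\sqrt{-i(z+\tau)}}\,dz \longrightarrow \int_{-\overline{\tau}}^{i\infty} \frac{g_{0,b+1/2}(z)}{\sqrt{-i(z+\tau)}}\,dz
\]
as $a \to -1/2^{+}$.

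Second, I would evaluate the singular term via the substitution $w = -i(z+\tau)$, which converts the contour integral into a one-sided Gaussian. With $\alpha := \pi(a+1/2)^{2}$ one finds
\[
\int_{-\overline{\tau}}^{i\infty} \frac{e^{\pi i(a+1/2)^{2}z}}{\sqrt{-i(z+\tau)}}\,dz = \frac{2i\,e^{-i\alpha\tau}}{\sqrt{\alpha}} \int_{\sqrt{2y\alpha}}^{\infty} e^{-u^{2}}\,du,
\]
where $y = \mathrm{Im}(\tau)$. As $\alpha \to 0^{+}$ the right side is asymptotic to $i\sqrt{\pi}/\sqrt{\alpha} = i/(a+1/2)$, so after multiplication by the prefactor $(a+1/2)\,e^{2\pi i(a+1/2)(b+1/2)}$ the singular piece contributes exactly $i$ in the limit.

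Combining the two pieces and equating with the right side of Theorem \ref{thm:gabints} in the limit gives
\[
i + \int_{-\overline{\tau}}^{i\infty} \frac{g_{0,b+1/2}(z)}{\sqrt{-i(z+\tau)}}\,dz = -e^{-\pi i\tau/4 - \pi i(b+1/2)}\,R(-\tau/2-b;\tau).
\]
Solving for $R(-\tau/2-b;\tau)$ by multiplying through by $-i\,e^{\pi i\tau/4 + \pi i b}$ and using $-i\,e^{\pi i b} = -e^{\pi i(b+1/2)}$ produces the claimed identity. The main obstacle is the precise evaluation of the singular Gaussian contribution described above; once that is pinned down by the explicit substitution, the remainder is routine dominated convergence together with exponential bookkeeping.
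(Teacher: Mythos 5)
Your proof is correct, but it takes a genuinely different route from the paper's. The paper does not pass to a limit: it reproves the boundary case from scratch by repeating Zwegers' term-by-term Gaussian integration for $g_{0,b+1/2}$ (substituting $z=u-\tau$, $u=iv$, $n^2v=w$, recognizing $\beta(2n^2\operatorname{Im}\tau)$ and hence $\sgn(n)-E(n\sqrt{2\operatorname{Im}\tau})$), and the constant $e^{\pi i\tau/4+\pi i b}$ emerges at the very end from bookkeeping around the $n=0$ term when the integer-indexed sum is reindexed over $n\in\tfrac12+\mathbb{Z}$ (the discrepancy between $\sgn(0)=0$ and $\sgn(1/2)=1$ produces the extra $-i$). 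You instead treat the identity as the limit $a\to-1/2^{+}$ of Theorem \ref{thm:gabints}, isolating the single term of $g_{a+1/2,b+1/2}$ whose coefficient $(a+1/2)$ tends to $0$ while its integral grows like $i/(a+1/2)$; your evaluation of that $0\cdot\infty$ as exactly $i$ is right, and the algebra recovering the stated form of the lemma checks out. What your route buys is economy: the theorem is used as a black box and only one singular term needs analysis. What the paper's route buys is self-containedness: it needs no interchange of limits, whereas your argument tacitly requires the continuity of $a\mapsto R(a\tau-b;\tau)$ at $a=-1/2$ (true, since $R(u;\tau)$ is real-analytic in $u$ and the defining series converges locally uniformly in the parameter, but it deserves a sentence) in addition to the dominated-convergence step you do flag. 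With those two justifications made explicit, your argument is a complete and valid alternative proof.
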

\begin{proof}
We proceed as in Zwegers' proof of Theorem \thm{gabints}. Here
$$
g_{1,b+1/2}(z) = g_{0,b+1/2}(z) = 
\sum\limits_{n \in \mathbb{Z}} ne^{\pi i n^2 z+ 2\pi i n(b+1/2)},
$$
where we can omit the term $n=0$.
This is uniformly bounded for $z$ in $\mathbb{H}$ away from zero, and for such $z$
the series is absolutely and uniformly convergent. We see that the
integral 
$$
\int\limits_{-\overline{\tau}}^{i\infty}\dfrac{g_{0,b+1/2}(z)}
            {\sqrt{-i(z+\tau)}} \,dz
$$ 
converges absolutely  and by uniform convergence we have
\begin{align*}
&\int\limits_{-\overline{\tau}}^{i\infty}\dfrac{g_{0,b+1/2}(z)dz}
            {\sqrt{-i(z+\tau)}}\,dz\\
&=\sum\limits_{n \in \mathbb{Z}, n \neq 0}
\int\limits_{-\overline{\tau}}^{i\infty}
\frac{ne^{\pi i n^2 z+ 2\pi i n(b+1/2)}}
         {\sqrt{-i(z+\tau)}}dz \qquad\mbox{($z=u - \tau$, dz = du)}\\
&=\sum\limits_{n \in \mathbb{Z}, n \neq 0}
  \int\limits_{\tau-\overline{\tau}}^{i\infty}
   \frac{ne^{\pi i n^2 (u-\tau)+ 2\pi i n(b+1/2)}}
        {\sqrt{-iu}}\,du           \qquad\mbox{($u = iv$, $du = i dv$)}  \\
&=i \sum\limits_{n \in \mathbb{Z}, n \neq 0}\int\limits_{2\IM(\tau)}^{\infty}
        \frac{ne^{\pi i n^2 (iv-\tau)+ 2\pi i n(b+1/2)}}
             {\sqrt{v}}\,dv  \\
&=i \sum\limits_{n \in \mathbb{Z}, n \neq 0}
    ne^{-\pi i n^2\tau+ 2\pi i n(b+1/2)}
\int\limits_{2\IM(\tau)}^{\infty} \frac{e^{-\pi n^2 v}}{\sqrt{v}} dv
   \qquad\mbox{($n^2 v = w$, $dv=dw/n^2$)}\\
&= i\sum\limits_{n \in\mathbb{Z}, n \ne0} \sgn(n) 
    e^{-\pi i n^2\tau+ 2\pi i n(b+1/2)}
\int\limits_{2n^2\IM(\tau)}^{\infty} \frac{e^{-\pi w}}{\sqrt{w}} dw\\
&=
i\sum\limits_{n \in \mathbb{Z}, n \neq 0}\sgn(n)
  e^{-\pi i n^2 \tau+ 2\pi i n(b+1/2)}\beta(2n^2\IM(\tau))\\
&=
i\sum_{n \in \mathbb{Z}, n \neq 0}(\sgn(n) - E(n\sqrt{2\IM(\tau)}))
  e^{-\pi i n^2 \tau+ 2\pi i n(b+1/2)}
 \qquad\mbox{(since $\beta(z^2) = 1 - \sgn(z) E(z)$)}\\
&= -i
+ i\sum_{n \in \mathbb{Z}}(\sgn(n) - E(n\sqrt{2\IM(\tau)}))
  e^{-\pi i n^2 \tau+ 2\pi i n(b+1/2)}  \\
&= -i
+ i\sum_{n \in 1/2+\mathbb{Z}}(\sgn(n-1/2) - E((n-1/2)\sqrt{2\IM(\tau)}))
  e^{-\pi i (n-1/2)^2 \tau+ 2\pi i (n-1/2)(b+1/2)}  \\
&= -i
+ ie^{-\pi i \tau/4 - \pi i b} 
   \sum_{n \in 1/2+\mathbb{Z}}(\sgn(n) - E((n-1/2)\sqrt{2\IM(\tau)}))
  (-1)^{n-1/2}
  e^{-\pi i n^2 \tau - 2\pi i n (-\tau/2-b))}  \\
&= -i
+ e^{-\pi i \tau/4 - \pi i (b-1/2) } 
   R(-\tau/2 -b ; \tau).
        \end{align*}
Thus we have shown 
$$                   
\int_{-\overline{\tau}}^{i\infty}
     \dfrac{g_{0,b+1/2}(z)}
           {\sqrt{-i(z+\tau)}} \,dz 
= -i - e^{-\pi i \tau/4 - \pi i (b + 1/2)}  R(-\tau/2-b;\tau),
$$
and the result follows.
\end{proof}

 \subsection{Some $q$-series identities }
\mylabel{subsec:someqids}
We will need the following $q$-series identities:
\begin{align}   
\dfrac{(q)_\infty}{(zq,z^{-1}q;q)_\infty} &= \dfrac{(1-z)}{(q)_\infty}\sum\limits_{n=-\infty}^{\infty}\dfrac{(-1)^n q^{n(n+1)/2}}{1-zq^n},
 \mylabel{eq:CRANK}\\
\sum\limits_{n = -\infty}^{\infty}(-1)^n q^{n^2} z^{n}
\dfrac{1 - z q^{2n}}{1 + z q^{2n}}
&= \dfrac{\Theta(z,q^2)\Theta(-zq,q^2)\Theta_{3}(q)}{\Theta(-z,q^2)},\mylabel{eq:thetaid}
\\
\omega(q) 
&= \dfrac{1}{(q^2;q^2)_{\infty}}\sum\limits_{n=-\infty}^{\infty}\dfrac{(-1)^n q^{3n(n+1)}}{1-q^{2n+1}},
\mylabel{eq:OMEGAWATSON}\\
f(q)&=\frac{1}{(q;q)_{\infty}}\sum\limits_{n=-\infty}^{\infty}\frac{(-1)^{n}q^{\frac{n(3n+1)}{2}}}{1+q^{n}},
\mylabel{eq:fidWAT}
\end{align}    
where 
$$
\Theta(z,q) = (z;q)_{\infty}(z^{-1}q;q)_{\infty}(q;q)_{\infty} \quad
\mbox{and}\quad \Theta_{3}(q) 
= \sum\limits_{n = -\infty}^{\infty} q^{n^2}.
$$
 Note that the left hand side of \eqn{CRANK} is the generating function 
for the vector crank of a partition \cite[p.168]{An-Ga1988} and the equality 
is established by. \cite[p.170]{Ekin}. The third and fourth identities
\eqn{OMEGAWATSON} and \eqn{fidWAT}  are due to Watson \cite[pp.64-66]{Wa1936}.

\subsubsection*{Proof of \eqn{OMEGAWATSON}}
    Let  $F(z)=F(z,q)$ be the left hand side of \eqn{thetaid}. Then
\begin{align*}
F(zq^2)
&= \sum_{n=-\infty}^{\infty} (-1)^n q^{n^2 + 2n}z^n 
   \frac{(1 - z q^{2n + 2})}{1 + z q^{2n + 2}} 
= -\sum_{n=-\infty}^{\infty} (-1)^n q^{n^2 - 1} z^{n-1}
   \frac{1 - z q^{2n}}{1 + z q^{2n}}
   \qquad (\text{replacing n by n-1}) \\[4pt]
&= -\frac{1}{zq} F(z).
\end{align*}
Now let $G(z)$ be the right hand side of \eqn{thetaid}. 
We observe that 
$$
G(zq^2)=\frac{-1}{zq}G(z).
$$ 
Now let $H(z)=F(z)-G(z)$, so that 
$$
H(zq^2)=\frac{-1}{zq}H(z).
$$
 By Lemma 2 in \cite[p.88]{At-SwD1954} , in the region 
$\abs{q^2}<\abs{z}\leq1$ 
the number of poles minus  number of zeroes of $H(z)$ , 
is either $-1$ or $H(z)\equiv 0$. 
We see that the only possible pole of $H(z)$ in the region 
$\abs{q^2}<\abs{z}\leq1$ could be at $z=-1$.
Near $z=-1$
$$                     
        F(z)=\frac{2}{1+z}+....,\quad\mbox{and}\quad 
G(z)=\frac{2}{1+z}\alpha(q)+...
$$              
where 
$$
\alpha(q)=\frac{(-q^2;q^2)_{\infty}(q;q^2)_{\infty}^2(q^2;q^2)_{\infty}\Theta_{3}(q)}
               {(q^2;q^2)^3} =1
$$            
since
$$                   
\frac{\theta(z,q^2)\Theta(-zq,q^2)\Theta_{3}(q)}{\Theta(-z,q^2)}
        =\frac{(z;q^2)(z^{-1}q^2;q^2)(-zq;q^2)(-z^{-1}q;q^2)\Theta_{3}(q)(q^2;q^2)}{(-z;q^2)(z^{-1}q^2;q^2)}.
$$                     
So, $H(z)$ has no pole at $z=-1$. 
It suffices to show that $H(z)=0$ for two values of $z$ in the 
region $\abs{q}^2<\abs{z}\leq 1$. 
We have
$$                  
F(1)=\sum\limits_{n = -\infty}^{\infty}(-1)^n q^{n^2}\dfrac{1-q^{2n}}{1+q^{2n}}
=\sum\limits_{n = -\infty}^{\infty}(-1)^n q^{n^2}\dfrac{1-q^{-2n}}{1+q^{-2n}}=-F(1)
$$               
thus $F(1)=0$.
Clearly $H(z)=0$ at $z=1$, since $G(1)=0$. Note $ G(-q) = 0$.

\begin{align*}
    F(-q)&=\sum\limits_{n = -\infty}^{\infty}q^{n^2+n}\dfrac{1+q^{2n+1}}{1-q^{2n+1}}=\sum\limits_{n = -\infty}^{\infty}q^{n^2+n}\dfrac{1+q^{-2n-1}}{1-q^{-2n-1}}=-F(-q)   \qquad (n \to -n-1).
\end{align*}
Hence $H(z)=0$ at $z=-q$ and thus the result follows.
\qed

In a private communication, George Andrews and Ole Warnaar have noted 
that 
\eqn{thetaid} can be proved from Bailey's ${}_{6}\psi_{6}$ summation formula~\cite[p.~239]{GasperRahman}:
In a private communication, George Andrews and Ole Warnaar have noted that 
\eqn{thetaid} can be proved from Bailey's ${}_{6}\psi_{6}$ summation 
formula~\cite[p.~239]{GasperRahman} (let
$a = z$,  
$b = i z^{1/2}$,  
$c = -i z^{1/2}$,  
$d = i \left( \frac{q}{t} \right)^{1/2}$,  
$e = -i \left( \frac{q}{t} \right)^{1/2}$,  and
$t \to 0$.)

\section{Proof of main results}
\mylabel{sec:mainproof}
In this section we give a detailed proof of the third component in 
\eqn{S-transformation} by proving the identity \eqn{NEWOMEGA} and 
in the next section we give a brief sketch of the other two transformations. 
First we write $h_{2}$ in terms of Zwegers' $\mutwid$ function and then look at 
its transformation. In the process we discover the identity \eqn{NEWOMEGA} 
and show that it's proof is equivalent to proving the third component 
of \eqn{S-transformation}. In the following Sections 3.3 and 3.4 
we find the $3-$dissection of an eta quotient, the $3-$dissection of a 
$\mu$ sum and prove relations. Using the $3$-dissections we prove \eqn{NEWOMEGA} 
in Section 3.5 thus proving the third component of \eqn{S-transformation}.

\subsection{$h_{2}(\tau)$ in terms of the $\mutwid$ function}
\mylabel{subsec:h2andmutilde}
We rewrite Zwegers' 
$h_{2}(\tau)$ in terms of his ecompleted $\mutwid$-function.  
\begin{theorem}
\mylabel{thm:h2thm}
We have
\begin{align*}
h_{2}(\tau) = 
2\dfrac{\eta(6\tau)^2\eta(3\tau/2)^2}{\eta(3\tau)^2\eta(\tau)} 
- 4q^{-1/24}\mu\left(3\tau/2 + 1/2,\tau;3\tau\right).
\end{align*}
\end{theorem}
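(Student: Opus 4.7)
The plan is to compute both sides of the claimed identity as explicit $q$-series and match them. First, substituting $q\mapsto -q^{1/2}$ into Watson's Lerch representation \eqn{OMEGAWATSON} of $\omega$, using that $3n(n+1)$ is always even and $(-q^{1/2})^{2n+1}=-q^{n+1/2}$, converts $f_2(\tau)=2q^{1/3}\omega(-q^{1/2})$ into the clean Lerch-type sum
\[
f_2(\tau) = \frac{2q^{1/3}}{(q;q)_\infty}\sum_{n\in\mathbb{Z}}\frac{(-1)^n q^{3n(n+1)/2}}{1+q^{n+1/2}}.
\]
In parallel, I would unfold $\mu(3\tau/2+1/2,\tau;3\tau)$ from its defining Appell-Lerch series. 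Jacobi's triple product yields the clean factorization $\vartheta(\tau;3\tau) = -iq^{-1/8}(q;q)_\infty$ after the elementary regrouping $(q;q^3)_\infty(q^2;q^3)_\infty(q^3;q^3)_\infty=(q;q)_\infty$, so $\mu(3\tau/2+1/2,\tau;3\tau)$ becomes an explicit $q$-series with $(q;q)_\infty^{-1}$ prefactor and denominators of the shape $1+q^{3n+3/2}$.

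Next, I would compute the $g_2$ contribution to $h_2=f_2-g_2$ by recognizing that $g_2(z)=g_{1/3,0}(3z)$ in Zwegers' notation of Proposition \propo{gabprops}. The substitution $w=3z$ in the defining integral for the third component of $G(\tau)$ moves the lower limit to $-\overline{3\tau}$ and produces a chain-rule factor $1/\sqrt{3}$; the resulting integral matches the hypothesis of Theorem \thm{gabints} applied with $a=-1/6$, $b=-1/2$, and modular argument $3\tau$. After tracking the branch conversion between $\sqrt{-i(z+\tau)}$ and $\sqrt{i(z+\tau)}$, this evaluates $g_2(\tau)$ as an explicit constant multiple of $q^{-1/24}R(\tau/2+1/2;3\tau)$, with the sign fixed via Proposition \propo{Rellprops}(a) and (c). The completion identity $\mutwid(u,v;\tau) = \mu(u,v;\tau)+\tfrac{i}{2}R(u-v;\tau)$ from Proposition \propo{mutwidprops} then lets me trade this $R$-term for the corresponding $\mu$-combination at $(3\tau/2+1/2,\tau;3\tau)$.

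With these pieces assembled, the theorem reduces to a $q$-series identity in which, after clearing the common factor $(q;q)_\infty^{-1}$, a certain combination of Lerch sums must equal a theta-function product. I would apply the Jacobi triple product \eqn{JTP} (and, if needed, \eqn{thetaid}) to rewrite the target eta quotient $2\eta(6\tau)^2\eta(3\tau/2)^2/(\eta(3\tau)^2\eta(\tau))$ as an equivalent theta series and verify equality term by term, in the style of the Atkin--Swinnerton-Dyer zero-counting argument used in the paper's proof of \eqn{OMEGAWATSON}. The main obstacle is precisely this last theta identification: exhibiting the exact theta identity that collapses the residual Lerch-sum combination into the eta quotient whose four moduli $\tau$, $3\tau/2$, $3\tau$, $6\tau$ are the tell-tale signature of a manipulation spanning both the $3\tau$ Appell-Lerch parameter and the half-integer shifts intrinsic to $\omega(-q^{1/2})$.
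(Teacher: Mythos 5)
Your architecture matches the paper's proof almost exactly: you start from Watson's Lerch representation \eqn{OMEGAWATSON} specialized at $-q^{1/2}$, you identify $g_2(z)=g_{1/3,0}(3z)$ and evaluate the Eichler integral by Theorem \thm{gabints} with the same parameters $a=-1/6$, $b=-1/2$ at modular argument $3\tau$, and you combine the resulting $R$-term with the $\mu$-term through $\mutwid$. Your evaluation $\vartheta(\tau;3\tau)=-iq^{-1/8}(q;q)_\infty$ is correct and is implicitly used in the paper as well. (Incidentally, the statement as printed should carry $\mutwid$ rather than $\mu$, consistent with \eqn{h2mutilde}; your proposal implicitly treats it that way.)

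The genuine gap is the step you yourself flag as the ``main obstacle'': reconciling the sum $\sum_n(-1)^nq^{3n(n+1)/2}/(1+q^{n+1/2})$ coming from $\omega(-q^{1/2})$ with the sum whose denominators are $1+q^{3n+3/2}$ coming from the unfolded $\mu$-function, so that their difference is the eta-quotient. A term-by-term comparison cannot work because the denominators run over different progressions of exponents, and an Atkin--Swinnerton-Dyer zero-counting argument in the style of the paper's proof of \eqn{OMEGAWATSON} is not available either, since after specialization there is no free elliptic variable in which to count poles and zeros in a fundamental annulus. The paper's mechanism is elementary but essential: write $1/(1+q^{n+1/2})=(1-q^{n+1/2}+q^{2n+1})/(1+q^{3n+3/2})$, split the $\omega$-sum into three pieces all having denominator $1+q^{3n+3/2}$, observe that the second and third pieces combine into (twice) the $\mu$-type sum after the index shifts $n\to n-1$ and $n\to -n$, and evaluate the first piece in closed form by the crank identity \eqn{CRANK} with $q\to q^3$ and $z\to q^{3/2}$, which produces exactly $2\eta(6\tau)^2\eta(3\tau/2)^2/(\eta(3\tau)^2\eta(\tau))$. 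Without this denominator-cubing step and the appeal to \eqn{CRANK}, your reduction stalls at precisely the identity the theorem asserts.
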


\begin{proof}
Recall that,
$$                   
h_2(\tau) = 2q^{1/3}\omega(-q^{1/2}) 
- 2i\sqrt{3}\int\limits_{-\overline{\tau}}^{i\infty} 
\dfrac{g_{2}(z)\,dz}{\sqrt{i(z+\tau)}},
$$              
where
$$                  
g_{2}(z) = \sum\limits_{n\in\mathbb{Z}}(n+1/3) e^{3\pi i (n+1/3)^{2}z}.
$$                
We observe:
\begin{align*}
\int\limits_{-\overline{\tau}}^{i\infty}
\dfrac{g_{2}(z)\,dz}{\sqrt{i(z+\tau)}} 
&= \int\limits_{-\overline{\tau}}^{i\infty}
\dfrac{g_{1/3,0}(3z)\,dz}{\sqrt{i(z+\tau)}} 
= \dfrac{1}{\sqrt{3}} \int\limits_{-3\overline{\tau}}^{i\infty}
\dfrac{g_{1/3,0}(z)\,dz}{\sqrt{i(z+3\tau)}} \\
&= \dfrac{-1}{\sqrt{3}}q^{-1/24}
R\left(-\tau/2 + 1/2;\,3\tau\right),
\end{align*}
by using $a=-1/6$ and $b=-1/2$ in Theorem \thm{gabints}.
\end{proof}
Now, from \eqn{OMEGAWATSON} we have
\begin{align*}
&\omega(-q^{1/2}) = \dfrac{1}{(q;q)_{\infty}}\sum\limits_{n=-\infty}^{\infty}\dfrac{(-1)^n q^{3n(n+1)/2}}{1+q^{n+1/2}}
=\dfrac{1}{(q;q)_{\infty}}\sum\limits_{n= -\infty}^{\infty}\dfrac{(-1)^n q^{3n(n+1)/2}(1-q^{n+1/2}+q^{2n+1})}{1+q^{3n+3/2}}
\\
&=\dfrac{1}{(q;q)_{\infty}}\sum\limits_{n=-\infty}^{\infty}\dfrac{(-1)^n q^{3n(n+1)/2}}{1+q^{3n+3/2}}
-\dfrac{q^{1/2}}{(q;q)_{\infty}}\sum\limits_{n= -\infty}^{\infty}\dfrac{(-1)^n q^{3n(n+1)/2+n}}{1+q^{3n+3/2}}
+\dfrac{q}{(q;q)_{\infty}}\sum\limits_{n=-\infty}^{\infty}\dfrac{(-1)^n q^{3n(n+1)/2+2n}}{1+q^{3n+3/2}}\\
&=\dfrac{1}{(q;q)_{\infty}}\sum\limits_{n=-\infty}^{\infty}\dfrac{(-1)^n q^{3n(n+1)/2}}{1+q^{3n+3/2}}
-\dfrac{2q^{-1}}{(q;q)_{\infty}}\sum\limits_{n=-\infty}^{\infty}\dfrac{(-1)^{n}q^{3n(n+1)/2-n}}{1+q^{3n-3/2}} \\
&\qquad\mbox{(by changing $n$ to $n-1$ in the second sum and $n$ to $-n$ in the third sum)}\\
&= \dfrac{(q^6;q^6)_{\infty}^{2}(q^{3/2};q^{3/2})_{\infty}^{2}}{(q^3;q^3)_{\infty}^{2}(q;q)_{\infty}}
-\dfrac{2q^{-1}}{(q;q)_{\infty}}\sum\limits_{n=-\infty}^{\infty}\dfrac{(-1)^{n}q^{3n(n+1)/2-n}}{1+q^{3n-3/2}}
 \qquad\mbox{(replacing $q$ by $q^{3}$ and $z$ by $q^{3/2}$ in \eqn{CRANK})}   
\end{align*}

Hence,
\begin{align*}
2q^{1/3}\omega(-q^{1/2})=2\dfrac{\eta(6\tau)^2\eta(3\tau/2)^2}{\eta(3\tau)^2\eta(\tau)}-4q^{-1/24}\mu\!\left(\tfrac{-3\tau}{2}+\tfrac{1}{2},-\tau;3\tau\right)
\end{align*}

and
\begin{align}
h_2(\tau) &= 2\dfrac{\eta(6\tau)^2\eta(3\tau/2)^2}{\eta(3\tau)^2\eta(\tau)}
- 4q^{-1/24}\mu\!\left( -\tfrac{3\tau}{2} + \tfrac{1}{2}, -\tau; 3\tau \right)
- 2i q^{-1/24}R\!\left( -\tfrac{\tau}{2} + \tfrac{1}{2}; 3\tau \right) \\
&= 2\dfrac{\eta(6\tau)^2\eta(3\tau/2)^2}{\eta(3\tau)^2\eta(\tau)} 
- 4 q^{-1/24}\mutwid\!\left( \tfrac{-3\tau}{2} + \tfrac{1}{2}, -\tau; 3\tau \right) 
\mylabel{eq:h2mutilde}
\end{align}

\subsection{Transforming $h_2$ and a new identity for $\omega(q)$}
\mylabel{subsec:transh2andomega}
In this section, we show that the transformation
\begin{equation}
-\dfrac{1}{\sqrt{-i \tau}}h_2\left(\dfrac{-1}{\tau}\right) = h_{2}(\tau) 
\mylabel{eq:3rdocomptrans}
\end{equation}
is equivalent to \eqn{NEWOMEGA}.
By \eqn{h2mutilde}, \eqn{Etatrans} and Proposition \propo{mutwidprops}(b) 
we have
\begin{align*}
&h_{2}\left(\dfrac{-1}{\tau}\right) = 2\dfrac{\eta(-6/\tau)^2 \eta(-3/2\tau)^2}{\eta(-3/\tau)^2 \eta(-1/\tau)} - 4 \exp(\pi i / 12\tau) \mutwid(3/2\tau + 1/2, 1/\tau; -3/\tau)\\
&= \dfrac{2\sqrt{-i \tau} \eta(\tau/6)^2 \eta(2\tau/3)^2}{3 \eta(\tau/3)^2 \eta(\tau)} + 4 \exp\left(\dfrac{\pi i}{12\tau}\right) \sqrt{\dfrac{-i\tau}{3}} \exp\left(\dfrac{-\pi i}{12}(\tau + 1/\tau + 2)\right) \mutwid(\tau/6 + 1/2, 1/3; \tau/3).
\end{align*}
Therefore
\begin{align*}
\dfrac{-1}{\sqrt{-i\tau}} h_{2}\left(\dfrac{-1}{\tau}\right) = -\dfrac{2 \eta(\tau/6)^2 \eta(2\tau/3)^2}{3 \eta(\tau/3)^2 \eta(\tau)} - \dfrac{4}{\sqrt{3}} \exp\left(\dfrac{\pi i}{12\tau}\right) \exp\left(\dfrac{-\pi i}{12}(\tau + 1/\tau + 2)\right) \mutwid(\tau/6 + 1/2, 1/3; \tau/3),
\end{align*}

\begin{align*}
= -\dfrac{2 \eta(\tau/6)^2 \eta(2\tau/3)^2}{3 \eta(\tau/3)^2 \eta(\tau)} - \dfrac{4}{\sqrt{3}} \exp\left(\dfrac{-\pi i \tau}{12} - \dfrac{-\pi i}{6}\right) \mu(\tau/6 + 1/2, 1/3; \tau/3) \\
- \dfrac{2i}{\sqrt{3}} \exp\left(\dfrac{-\pi i \tau}{12} - \dfrac{-\pi i}{6}\right) R(\tau/6 + 1/6; \tau/3).
\end{align*}
By Proposition \propo{mutwidprops} and Lemma \lem{lemmaG} with   $b = 1/6$ and $\tau \rightarrow \tau/3$, we have
\begin{align*}
-\dfrac{1}{\sqrt{-i \tau}} h_{2}\left(-\dfrac{1}{\tau}\right) = 
-\dfrac{2i}{\sqrt{3}} - \dfrac{2}{3} \dfrac{\eta(\tau/6)^2 \eta(2\tau/3)^2}{\eta(\tau/3)^2 \eta(\tau)} 
&- \dfrac{4}{\sqrt{3}} \exp\left(-\dfrac{\pi i \tau}{12} - \dfrac{\pi}{6}\right) \mu(\tau/6 + 1/2, 1/3; \tau/3) \\
&- \dfrac{2}{3} \int\limits_{-\overline{\tau}}^{i \infty} \dfrac{g_{0,2/3}(z/3)\,dz}{\sqrt{-i(z+\tau)}}.
\end{align*}

We show that $g_{0,2/3}(z/3) = -i3\sqrt{3}g_{2}(z)$. 
Since, $g_{2}(-1/\tau)=(-i \tau)^{3/2}g_{2}(\tau)$ and $g_{2}(\tau)=g_{1/3,0}(\tau)$,
\begin{align*}
 g_{2}(\tau)=\dfrac{g_{1/3,0}\left(-\dfrac{1}{\tau/3}\right)}{(-i \tau)^{3/2}} 
=\dfrac{i(-i\tau/3)^{3/2} \, g_{0,-1/3}(\tau/3)}{(-i\tau)^{3/2}} 
= \dfrac{i \, g_{0,2/3}(\tau/3)}{3\sqrt{3}} 
\qquad \mbox{(by Proposition \propo{gabprops}(v))}
\end{align*}
Therefore, 
\begin{align*}
-\dfrac{1}{\sqrt{-i \tau}} h_{2}\left(-\dfrac{1}{\tau}\right) = 
-\dfrac{2i}{\sqrt{3}} - \dfrac{2}{3} \dfrac{\eta(\tau/6)^2 \eta(2\tau/3)^2}{\eta(\tau/3)^2 \eta(\tau)} 
&- \dfrac{4}{\sqrt{3}} \exp\left(-\dfrac{\pi i \tau}{12} - \dfrac{\pi}{6}\right) \mu(\tau/6 + 1/2, 1/3; \tau/3) \\
&- 2i\sqrt{3} \int\limits_{-\overline{\tau}}^{i \infty} \dfrac{g_{2}(z)\,dz}{\sqrt{-i(z+\tau)}}.
\end{align*}
Since
\begin{align*}
h_{2}(\tau) 
= 2q^{1/3} \omega(-\sqrt{q}) + 2i \sqrt{3} \int\limits_{-\overline{\tau}}^{i \infty} \dfrac{g_{2}(z)\,dz}{\sqrt{i(z+\tau)}}. 
\end{align*}
We see that the transformation 
\begin{equation*}
-\dfrac{1}{\sqrt{-i \tau}}h_2\left(\dfrac{-1}{\tau}\right) = h_{2}(\tau) 
\end{equation*} 
is equivalent to
\begin{align*}
    2q^{1/3} \omega(-\sqrt{q}) 
= -\dfrac{2i}{\sqrt{3}} - \dfrac{2}{3} \dfrac{\eta(\tau/6)^2 \eta(2\tau/3)^2}{\eta(\tau/3)^2 \eta(\tau)} 
- \dfrac{4}{\sqrt{3}} \exp\left(-\dfrac{\pi i \tau}{12} - \dfrac{\pi i}{6}\right) \mu(\tau/6 + 1/2, 1/3; \tau/3),
\end{align*}
which is equivalent to \eqn{NEWOMEGA} after replacing $\tau$ by $6\tau$.

\subsection{Dissection of an eta-quotient}
\mylabel{subsec:disseta}
\begin{lemma}\mylabel{lem:eta3diss}
$3$-dissection of the $\eta$-quotient:  
\[
\dfrac{E(q)^2 E(q^4)^2}{E(q^2)^2 E(q^6)} = e_0(q^3) - 2q e_1(q^3) + q^2 e_2(q^3),
\]
where
\[
e_0(q) = \dfrac{E(q^6)^{10} E(q^4)^2 E(q)^2}{E(q^{12})^4 E(q^3)^4 E(q^2)^5}, \quad
e_1(q) = \dfrac{E(q^6)^4 E(q^4) E(q)}{E(q^{12}) E(q^3) E(q^2)^3}, \quad
e_2(q) = \dfrac{E(q^{12})^2 E(q^3)^2}{E(q^6)^2 E(q^2)}.
\]

\begin{proof}
Let \begin{align*}
    \Delta(q) &= \sum\limits_{n \geq 0} q^{n(n+1)/2}= \sum\limits_{n = -\infty}^{\infty} q^{n(2n+1)}=\frac{E(q^2)^2}{E(q)}
    \text{  and}\\
    \Delta(-q) &= \sum\limits_{n \geq 0} (-q)^{n(n+1)/2}=\dfrac{E(q) E(q^4)}{E(q^2)}
\end{align*}
Then
\begin{align*}
     \dfrac{E(q)^{2} E(q^4)^{2}}{E(q^2)^{2}}= \left(\Delta(-q)\right)^{2}.
\end{align*}
We have
\begin{align*}
n(2n+1) &\equiv 0 \bmod{3} \quad \text{when } n \equiv 0, 1 \bmod{3}, \\
     n(2n+1) &\equiv 1 \bmod{3} \quad \text{when } n \equiv 2 \bmod{3}.
\end{align*}

So,
\begin{align*}
\Delta(q) &= P_0(q^3) + q P_1(q^3), \\
\text{where }
P_0(q) &= \sum\limits_{n = -\infty}^{\infty} q^{6n^2 + n} 
       + \sum\limits_{n = -\infty}^{\infty} q^{6n^2 + 5n} 
       = \sum\limits_{n = -\infty}^{\infty} q^{\frac{3n^2 + 3n}{2}} 
       = \dfrac{E(q^2) E(q^3)^2}{E(q^6) E(q)}, \\
\text{and }P_1(q) &= \sum\limits_{n = -\infty}^{\infty} q^{6n^2 - 3n} 
       = \dfrac{E(q^6)^2}{E(q^3)},
\end{align*}

from Jacobi's triple product identity \eqn{JTP}. We find

\[
\dfrac{\Delta^2(-q)}{E(q^6)} = \dfrac{(P_0(-q^3) - q P_1(-q^3))^2}{E(q^6)} = e_0(q^3) - 2q e_1(q^3) + q^2 e_2(q^3),
\]
where
\[
e_0(q) = \dfrac{E(q^6)^{10} E(q^4)^2 E(q)^2}{E(q^{12})^4 E(q^3)^4 E(q^2)^5}, \quad
e_1(q) = \dfrac{E(q^6)^4 E(q^4) E(q)}{E(q^{12}) E(q^3) E(q^2)^3}, \quad
e_2(q) = \dfrac{E(q^{12})^2 E(q^3)^2}{E(q^6)^2 E(q^2)}.
\]

\end{proof}
\end{lemma}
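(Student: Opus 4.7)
The plan is to derive the $3$-dissection of the eta-quotient $E(q)^2 E(q^4)^2/(E(q^2)^2 E(q^6))$ from a simpler $3$-dissection of the theta-like series $\Delta(q) := \sum_{n \geq 0} q^{n(n+1)/2}$, then square. First, by a standard Jacobi triple product manipulation one has $\Delta(q) = E(q^2)^2/E(q)$, hence $\Delta(-q) = E(q) E(q^4)/E(q^2)$. It follows that the left-hand side of the lemma is exactly $\Delta(-q)^2/E(q^6)$, so it is enough to $3$-dissect $\Delta(q)$, then replace $q$ by $-q$ and square.

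To $3$-dissect $\Delta(q) = \sum_{n \in \mathbb{Z}} q^{n(2n+1)}$, I would compute $n(2n+1) \bmod 3$: the residue is $0$ for $n \equiv 0, 1 \pmod 3$ and is $1$ for $n \equiv 2 \pmod 3$. Substituting $n = 3m$, $n = 3m+1$, $n = 3m+2$ and collecting terms yields a decomposition $\Delta(q) = P_0(q^3) + q P_1(q^3)$, where $P_0$ and $P_1$ are explicit theta series of the form $\sum_m q^{6m^2+bm+c}$. Each such series is then recognized as an eta-quotient by a further application of \eqn{JTP}, producing closed forms for $P_0(q)$ and $P_1(q)$.

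With the $3$-dissection of $\Delta$ in hand, replacing $q$ by $-q$ gives $\Delta(-q) = P_0(-q^3) - q P_1(-q^3)$. Squaring and dividing by $E(q^6)$ produces
\[
\frac{\Delta(-q)^2}{E(q^6)} = \frac{P_0(-q^3)^2}{E(q^6)} - 2q\,\frac{P_0(-q^3) P_1(-q^3)}{E(q^6)} + q^2\,\frac{P_1(-q^3)^2}{E(q^6)},
\]
which exhibits the required $3$-dissection with coefficients $e_0(q) = P_0(-q)^2/E(q^2)$, $e_1(q) = P_0(-q) P_1(-q)/E(q^2)$, and $e_2(q) = P_1(-q)^2/E(q^2)$.

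The main obstacle is matching each $e_i(q)$ with the specific eta-quotient stated in the lemma. This requires using the standard identity $E(-q^k) = E(q^{2k})^3/(E(q^k) E(q^{4k}))$ to remove all negative arguments, followed by substantial cancellations among eta factors. Each step is elementary, but the bookkeeping is intricate and is where I would expect the most effort and the most likely computational slips.
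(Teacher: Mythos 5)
Your proposal is correct and follows essentially the same route as the paper: write the quotient as $\Delta(-q)^2/E(q^6)$ with $\Delta(q)=E(q^2)^2/E(q)=\sum_{n\in\mathbb{Z}}q^{n(2n+1)}$, $3$-dissect $\Delta(q)=P_0(q^3)+qP_1(q^3)$ via the residues of $n(2n+1)$ modulo $3$ and the Jacobi triple product, then substitute $q\mapsto -q$, square, and simplify using $E(-q)=E(q^2)^3/(E(q)E(q^4))$. The resulting identifications $e_0(q)=P_0(-q)^2/E(q^2)$, $e_1(q)=P_0(-q)P_1(-q)/E(q^2)$, $e_2(q)=P_1(-q)^2/E(q^2)$ check out against the stated eta-quotients.
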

By another routine application of Jacobi's triple product identity \eqn{JTP}, we find that,
\begin{equation}
    \vartheta\left(\frac{1}{3}, 2\tau\right) = e^{\frac{5\pi i}{6}}\left(\frac{3}{2} + i \frac{\sqrt{3}}{2}\right) q^{1/4} E(q^6).
\end{equation}

\subsection{Dissection of a sum}
\mylabel{subsec:disssum}
We find the $3-$dissection of the sum in 
\begin{align*}
    \mu(\tau+1/2,1/3,2\tau)=\frac{e^{\pi i (\tau+1/2)}}{\vartheta(1/3;2\tau)}\sum\limits_{n=-\infty}^{\infty}\dfrac{(-1)^n e^{2\pi i n/3}q^{n^2+n}}{1+q^{2n+1}}.
    \end{align*}
\begin{lemma}
\mylabel{lem:mudiss}
We define the functions $Y_j$ and $Y_{jk}$ by
\begin{align*}
    \sum\limits_{n=-\infty}^{\infty}\dfrac{(-1)^ne^{2\pi i n/3}q^{n^2+n}}{1+q^{2n+1}} = Y_0 + \zeta Y_1 +\zeta^2 Y_2,
\end{align*}
where $\zeta = e^{2\pi i/3}$ and
\begin{align*}
    Y_{j} &= \sum\limits_{\substack{n =- \infty\\n\equiv j\bmod 3}}^{\infty}\dfrac{(-1)^n q^{n^2+n}}{1+q^{2n+1}}=\sum\limits_{k=0}^{2}q^{k}Y_{jk}(q^3),
\end{align*}
for $0\leq j\leq 2$\\

Then
\begin{multicols}{2}
 \begin{enumerate}[(i)]
        \item $Y_0-Y_2 = E(q^6)$,\\
        \item  $\dfrac{Y_{00}(q)}{E(q^2)} = 1/2(e_0(q)+1)$,\\
        \item $\dfrac{Y_{01}(q)}{E(q^2)} = -e_1(q)$,\\
        \item $Y_{10}(q) = Y_{11}(q) = 0$,\\
        \item $\dfrac{Y_{12}(q)}{E(q^2)} = -\omega(-q)$,\\
        \item  $2\dfrac{Y_{02}(q)}{E(q^2)} = \omega(-q)+e_2(q)$,
    \end{enumerate}
\end{multicols}
where $e_{0}(q)$, $e_{1}(q)$, $e_{2}(q)$ are given in Lemma \lem{eta3diss}. 
\end{lemma}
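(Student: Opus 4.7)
The plan is to split the six claims into (a) direct identifications of pieces of the series that establish (iv), (v), and (i), and (b) a single global closed form for $Y_0+Y_1+Y_2$ that, once combined with (i) and (iv), determines (ii), (iii), and (vi) by linear algebra.

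First I would dispatch (iv) and (v). Substituting $n=3m+1$ into the series for $Y_1$ gives $n^2+n=9m^2+9m+2$ and $2n+1=6m+3$, so every exponent of $q$ appearing in $Y_1$ is $\equiv 2 \pmod 3$; hence $Y_{10}=Y_{11}=0$, which is (iv). The remaining piece, after factoring out $q^2$ and rescaling $q^3\to q$, is exactly $-E(q^2)\omega(-q)$ by Watson's identity \eqn{OMEGAWATSON} (with $q$ replaced by $-q$, using that $3n(n+1)$ is always even), giving (v). For (i), I would substitute $n=3m$ in $Y_0$ and $n=3m+2$ in $Y_2$, then reindex $m\to -m-1$ in $Y_2$ so that both series share the denominator $1+q^{6m+1}$. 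Their difference then collapses to $\sum_{m\in\mathbb{Z}}(-1)^m q^{9m^2+3m}$, and the Jacobi triple product \eqn{JTP} with base $q^{18}$ and $z=q^3$ evaluates this to $(q^6,q^{12},q^{18};q^{18})_{\infty}=E(q^6)$.

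Next I would compute $S(1):=Y_0+Y_1+Y_2$ in closed form. Applying the crank identity \eqn{CRANK} with $q\to q^2$ and $z=-q$ yields
\[
S(1) \;=\; \sum_{n\in\mathbb{Z}}\frac{(-1)^n q^{n^2+n}}{1+q^{2n+1}} \;=\; \frac{E(q^2)^2}{(-q;q^2)_\infty^{\,2}},
\]
and the standard reduction $(-q;q^2)_\infty=E(q^2)^2/(E(q)E(q^4))$ (from $(q;q^2)_\infty(-q;q^2)_\infty=(q^2;q^4)_\infty$) simplifies the right side to $E(q)^2 E(q^4)^2/E(q^2)^2$. Multiplying and dividing by $E(q^6)$ and invoking Lemma~\lem{eta3diss} gives
\[
Y_0+Y_1+Y_2 \;=\; E(q^6)\bigl(e_0(q^3)-2q\,e_1(q^3)+q^2 e_2(q^3)\bigr).
\]
Reading off the coefficients of $q^0,q^1,q^2$ in the $q^3$-dissection and using $Y_{10}=Y_{11}=0$ produces
\[
Y_{00}+Y_{20}=E(q^2)e_0(q),\qquad Y_{01}+Y_{21}=-2E(q^2)e_1(q),\qquad Y_{02}+Y_{12}+Y_{22}=E(q^2)e_2(q).
\]

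To finish, I would combine these with the coefficient-wise consequences of (i), namely $Y_{00}-Y_{20}=E(q^2)$, $Y_{01}=Y_{21}$, and $Y_{02}=Y_{22}$, and substitute $Y_{12}=-E(q^2)\omega(-q)$ from (v) into the third equation; this immediately produces (ii), (iii), and (vi). The principal obstacle is the middle step: one must recognize that the crank-identity closed form $E(q^2)^2/(-q;q^2)_\infty^{\,2}$ for $S(1)$ is precisely $E(q^6)$ times the eta-quotient dissected in Lemma~\lem{eta3diss}. Once that alignment is spotted, the rest of the argument reduces to a $3\times 3$ linear solve.
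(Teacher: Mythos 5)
Your proposal is correct, and for parts (ii), (iii), (vi) it takes a genuinely different route from ours. You handle (i), (iv) and (v) exactly as we do (reindexing plus \eqn{JTP} for (i), and \eqn{OMEGAWATSON} with $q\to-q$ for (v)). Where you diverge is that we prove (ii), (iii) and (vi) one at a time: (ii) via the theta identity \eqn{thetaid} with $q\to q^3$, $z\to q$; (iii) via \eqn{CRANK} with $q\to q^3$, $z=-q$; and (vi) via a longer chain of reindexings together with \eqn{CRANK} at $q\to q^6$, $z\to -q^3$ and an elliptic-transformation property of $\mu$. You instead evaluate the undissected sum $Y_0+Y_1+Y_2$ in a single stroke from \eqn{CRANK} with $q\to q^2$, $z=-q$, obtaining $E(q)^2E(q^4)^2/E(q^2)^2$, recognize this as $E(q^6)$ times the eta-quotient dissected in Lemma~\lem{eta3diss}, and then extract (ii), (iii), (vi) by comparing residue classes modulo $3$ against the dissections of $Y_0+Y_1+Y_2$ and of $Y_0-Y_2=E(q^6)$, using (iv) and (v). Your computation checks out (in particular $(-q;q^2)_\infty=E(q^2)^2/(E(q)E(q^4))$ is right, and since $E(q^6)$ and the $e_j(q^3)$ contain only exponents divisible by $3$ the dissection separates cleanly), and it is arguably more economical: it bypasses \eqn{thetaid} entirely for this lemma and replaces the laborious proof of (vi) with a linear solve; it also makes transparent why the eta-quotient of Lemma~\lem{eta3diss} is exactly the one appearing in \eqn{NEWOMEGA}. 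What our component-wise treatment buys is a direct, self-contained derivation of each individual $Y_{jk}$ identity. Incidentally, your derivation confirms the sign $Y_{01}=-E(q^2)e_1(q)$ as stated in the lemma; the displayed computation in our proof of (iii) drops this minus sign (it arises from the $-q^{6m+1}$ term in the expansion $1/(1+q^{6m+1})=(1-q^{6m+1}+q^{12m+2})/(1+q^{18m+3})$).
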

\begin{proof}
(i)                 
\begin{align*}
    Y_{0}-Y_{2}&=\sum\limits_{n \in \mathbb{Z}}\dfrac{(-1)^{n}q^{9n^2+3n}}{1+q^{6n+1}}+\sum\limits_{n \in \mathbb{Z}}\dfrac{(-1)^{n}q^{9n^2+15n+6}}{1+q^{6n+5}}
    =\sum\limits_{n \in \mathbb{Z}}\dfrac{(-1)^{n}q^{9n^2+3n}}{1+q^{6n+1}}+\sum\limits_{n \in \mathbb{Z}}\dfrac{(-1)^{n}q^{9n^2+9n+1}}{1+q^{6n+1}}\\
    &=\sum\limits_{n \in \mathbb{Z}}\dfrac{(-1)^{n}q^{9n^2+3n}(1+q^{6n+1})}{1+q^{6n+1}}=\sum\limits_{n \in \mathbb{Z}}(-1)^{n}q^{9n^2+3n} =E(q^6),
\end{align*}
by the Jacobi's triple product identity\eqn{JTP}.

(ii)                     
By Euler's pentagonal number theorem and \eqn{thetaid}
(with $q\rightarrow q^3$ and $z\rightarrow q$) we have
\begin{align*}
    2Y_{00}-E(q^2)&=2\sum\limits_{n \in \mathbb{Z}}\dfrac{(-1)^n q^{3n^2+n}}{1+q^{6n+1}}-\sum\limits_{n \in \mathbb{Z}}(-1)^nq^{3n^2+n}
    =\sum\limits_{n \in \mathbb{Z}}(-1)^n q^{3n^2+n}\left(\dfrac{2}{1+q^{6n+1}}-1\right)\\
    &=\sum\limits_{n \in \mathbb{Z}}\dfrac{(-1)^n q^{3n^2+n}(1-q^{6n+1})}{1+q^{6n+1}}= \dfrac{\theta(q,q^6)\theta(-q^4,q^6)\theta_{3}(q^3)}{\theta(-q,q^6)}=e_{0}(q)E(q^2) 
\end{align*}
  after some simplification using Jacobi's triple product identity. This completes the proof of (ii).

(iii) 
Using \eqn{CRANK}(with $q \rightarrow q^3, z=-q$)
\begin{align*}
    Y_{01}(q)&=\sum\limits_{n \in \mathbb{Z}}\dfrac{(-1)^nq^{3n^2+3n}}{1+q^{6n+1}}=\dfrac{E(q^6)^2}{(1+q)(-q^7;q^6)_{\infty}(-q^5;q^6)_{\infty}}
    =\dfrac{E(q^6)^4E(q^4)E(q)}{E(q^2)^2E(q^3)E(q^{12})}= E(q^2)e_{1}(q)
\end{align*}
thus proving (iii).

(iv)
Since
\begin{center}
    $Y_{1}(q) =- \sum\limits_{n \in \mathbb{Z}} \dfrac{(-1)^n q^{9n^2 + 9n + 2}}{1 + q^{6n + 3}}$
\end{center}

\medskip

It is clear that
\[
Y_{10}(q) = Y_{11}(q) = 0.
\]

\medskip

(v) We have
\begin{align*}
    Y_{12}(q) 
    = -\sum\limits_{n \in \mathbb{Z}} \dfrac{(-1)^n q^{3n^2 + 3n}}{1 + q^{2n + 1}} 
    = -\omega(-q) E(q^2)
\end{align*}
by \eqn{OMEGAWATSON}.

(vi)\[
\begin{aligned}
2Y_{02} - E(q^2)\omega(-q) 
&= 2 \sum\limits_{n \in \mathbb{Z}} \dfrac{(-1)^n q^{3n^2 + 5n}}{1 + q^{6n + 1}} 
  - \sum\limits_{n \in \mathbb{Z}} \dfrac{(-1)^n q^{3n^2 + 3n}}{1 + q^{2n + 1}} \\
&= 2 \sum\limits_{n \in \mathbb{Z}} \dfrac{(-1)^n q^{3n^2 + 5n}}{1 + q^{6n + 1}} 
  - \sum\limits_{n \in \mathbb{Z}} \dfrac{(-1)^n q^{3n^2 + 3n}(1 - q^{2n + 1} + q^{4n + 2})}{1 + q^{6n + 3}} \\
&= 2 \sum\limits_{n \in \mathbb{Z}} \dfrac{(-1)^n q^{3n^2 + 5n}}{1 + q^{6n + 1}} 
  - \sum\limits_{n \in \mathbb{Z}} \dfrac{(-1)^n q^{3n^2 + 3n}}{1 + q^{6n + 3}} \\
&\quad + \sum\limits_{n \in \mathbb{Z}} \dfrac{(-1)^n q^{3n^2 + 5n + 1}}{1 + q^{6n + 3}} 
  - \sum\limits_{n \in \mathbb{Z}} \dfrac{(-1)^n q^{3n^2 + 7n + 2}}{1 + q^{6n + 3}} \\
&= 2 \sum\limits_{n \in \mathbb{Z}} \dfrac{(-1)^n q^{3n^2 + 5n}}{1 + q^{6n + 1}} 
  - 2 \sum\limits_{n \in \mathbb{Z}} \dfrac{(-1)^n q^{3n^2 + 7n + 2}}{1 + q^{6n + 3}} 
  - \sum\limits_{n \in \mathbb{Z}} \dfrac{(-1)^n q^{3n^2 + 3n}}{1 + q^{6n + 3}} \\
&= 2 \sum\limits_{n \in \mathbb{Z}} \dfrac{(-1)^n q^{3n^2 + 5n}}{1 + q^{6n + 1}} 
  - 2q^2 \sum\limits_{n \in \mathbb{Z}} \dfrac{(-1)^n q^{3n^2 + 7n }}{1 + q^{6n + 3}} - E(q^2)e_2(q) \\
&= 2E(q^2)e_2(q) - E(q^2)e_2(q) = E(q^2)e_2(q)
\end{aligned}
\]

\noindent
by using \eqn{CRANK} with \( q \rightarrow q^6 \), \( z \rightarrow -q^3 \) and using Proposition 1.4(7) in [\cite{Zw2002}, p.9].
This completes the proof of (vi).
\end{proof}

\subsection{Bringing everything together}
\mylabel{subsec:toget}
    We complete the proof of our new omega identity \eqn{NEWOMEGA} 
$$                 
2q^{2}\omega(-q^3) = -\dfrac{2i}{\sqrt{3}} - 
      \dfrac{2}{3}\dfrac{E(q)^2  E(q^4)^2}{E(q^2)^2 E(q^6)} 
      - \dfrac{4}{\sqrt{3}}\exp\left({\dfrac{-\pi i \tau}{2}
      -\dfrac{\pi i}{6}}\right)\mu(\tau+1/2,1/3;2\tau).
$$                        
\begin{proof}
By Lemma \lem{mudiss} we have
\begin{align*}
Y_0(q^3) + \zeta Y_1(q^3) + \zeta^2 Y_2(q^3)
&= \frac{E(q^{6})}{2}\Big[
   (1+\zeta^{2})e_{0}(q^3) + (1-\zeta^{2}) \\
&\quad
   - 2q(1+\zeta^{2})e_{1}(q^3)
   + q^2(1+\zeta^{2})e_{2}(q^3)
   - 3q^2\zeta\,\omega(-q^3)
\Big].
\end{align*}

Therefore,
by Lemmas  \lem{eta3diss} and \lem{mudiss} we have

\begin{align*}
       & -\dfrac{2i}{\sqrt{3}} - \dfrac{2}{3}\dfrac{E(q)^2  E(q^4)^2}{E(q^2)^2 E(q^6)} - \dfrac{4}{\sqrt{3}} \exp\left({-\dfrac{\pi i\tau}{2}-\dfrac{\pi i}{6}}\right)\mu(\tau+1/2,1/3;2\tau)\\
     &=-\dfrac{2i}{\sqrt{3}} - \dfrac{2}{3}(\text{e}_0(q^3)-2q\text{e}_1(q^3)+q^2\text{e}_3(q^3))+\dfrac{4}{3}\frac{e^{\pi i/3}}{E(q^6)}\left( Y_0(q^3) + \zeta Y_1(q^3) +\zeta^2 Y_2(q^3)\right)\\
      &=-\dfrac{2i}{\sqrt{3}} - \dfrac{2}{3}(\text{e}_0(q^3)-2q\text{e}_1(q^3)+q^2\text{e}_3(q^3))+\dfrac{2}{3}\left( 1+2\zeta+ e_{0}(q^3)-2qe_{1}(q^3)+q^2e_{2}(q^3)+3q^2\omega(-q^3)\right)\\
     &=2q^{2}\omega(-q^3)
\end{align*}
after some simplification.
This proves the new omega identity \eqn{NEWOMEGA}.

\end{proof}    

\section{The Other half}
\mylabel{sec:other}
In this section we prove the transformations for the two other
components of \eqn{S-transformation}. Using similar methods
as described in Section 3, we arrive at two new identities, one
involving the mock theta function $\omega$ and the other involving
$f$. Proving these identities are equivalent to showing the two
remaining transformations of \eqn{S-transformation}. We observe
that \eqn{NEWOMEGA2} follows from \eqn{NEWOMEGA} by considering the
transformation $\tau \rightarrow 6\tau $ followed by $q\rightarrow
-q$. We also observe that the proof of this transformation implies
the proof of the \eqn{NEWF} since the transformations
\begin{align}
\frac{1}{\sqrt{-i\tau}}h_{0}\!\left(\frac{-1}{\tau}\right) &= h_{1}(\tau) 
\mylabel{eq:Stransh0},\\
\frac{1}{\sqrt{-i\tau}}h_{1}\!\left(\frac{-1}{\tau}\right) &= h_{0}(\tau) 
\mylabel{eq:Stransh1}
\end{align}
are clearly equivalent.

\subsection{Another identity involving the $\omega$ function}
\mylabel{subsec:anotherid}
From Watson's identity \eqn{fidWAT} and \cite[1.12, p.292]{JBT2022} we can 
write the mock theta function $f(q)$ in terms of Zwegers' $\mu$-function:
$$                  
q^{-1/24}f(q)
=\frac{\eta(3\tau)^4}{\eta(\tau)\eta(6\tau)^2}
  +4q^{-1/6}\mu\left(2\tau+1/2,\tau;3\tau \right).
$$              
Thus 
$$ 
h_{0}(\tau)=\frac{\eta(3\tau)^4}{\eta(\tau)\eta(6\tau)^2}
+4q^{-1/6}\mu\left(2\tau+1/2,\tau;3\tau \right)
 - 2i\sqrt{3}\int\limits_{-\overline{\tau}}^{i\infty}
  \frac{g_{1}(z)dz}{\sqrt{-i(z+\tau)}}.
$$               

To write $h_{0}$ in terms of Zwegers' $\mutwid$-function we need
$$ 
    g_{1}(z)=-\sum\limits_{n\in \mathbb{Z}}(n+1/6)e^{3\pi i (n+1/6)^2 z}
    =-\sum\limits_{n\in 1/6+\mathbb{Z}}  (n)e^{3\pi i (n)^2 z}
    =-g_{1/6,0}(3z).
$$               
Thus
\begin{align*}      
2i\sqrt{3}\int\limits_{-\overline{\tau}}^{i\infty}
    \frac{g_{1}(z)dz}{\sqrt{-i(z+\tau)}}
&=-2i\sqrt{3}\int\limits_{-\overline{\tau}}^{i\infty}
       \frac{g_{1/6,0}(3z)dz}{\sqrt{-i(z+\tau)}}
    =-2i\int\limits_{-\overline{3\tau}}^{i\infty}
      \frac{g_{1/6,0}(z)dz}{\sqrt{-i(z+3\tau)}}\\
&=2iq^{-1/6}R(-\tau+1/2;3\tau),
\end{align*}      
by substituting $a=-1/3$ and $b=-1/2$ in Theorem \thm{gabints}. 
Using Proposition \propo{Rellprops}(a) and (c), we have 
$$                     
h_{0}(\tau)=\frac{\eta(3\tau)^4}{\eta(\tau)\eta(6\tau)^2}
           +4q^{-1/6}\mutwid\left(2\tau+1/2,\tau;3\tau \right)
$$          
Using \eqn{Etatrans} and Proposition \propo{mutwidprops}(b) we find that 
\begin{align*}
    h_{0}\left(\frac{-1}{\tau}\right)
&=\frac{2\sqrt{-i\tau}\eta(\tau/3)^4}{3\eta(\tau)\eta(\tau/6)^2}+4e^{\frac{\pi i}{3\tau}}\mutwid\left(-2/\tau+1/2,-1/\tau;-3/\tau \right),\\
    &=\frac{2\sqrt{-i\tau}\eta(\tau/3)^4}{3\eta(\tau)\eta(\tau/6)^2}+4e^{\frac{\pi i}{3\tau}}\mutwid\left(\frac{(\tau-4)/6}{\tau/3},\frac{-1/3}{\tau/3};\frac{-1}{\tau/3} \right),\\
    &= \frac{2\sqrt{-i\tau}\eta(\tau/3)^4}{3\eta(\tau)\eta(\tau/6)^2} - 4q^{-1/24}e^{\pi i/3}\sqrt{\frac{-i\tau}{3}}\mutwid\left( (\tau-4)/6,-1/3; \tau/3\right).
\end{align*}
Thus,
\begin{align*}
&\frac{1}{\sqrt{-i\tau}}h_{0}\left(\frac{-1}{\tau}\right)
=\frac{2\eta(\tau/3)^4}{3\eta(\tau)\eta(\tau/6)^2} - \frac{4}{\sqrt{3}}q^{-1/24}e^{\pi i/3}\mutwid\left( (\tau-4)/6,-1/3; \tau/3\right)\\
&=\frac{2\eta(\tau/3)^4}{3\eta(\tau)\eta(\tau/6)^2} - \frac{4}{\sqrt{3}}q^{-1/24}e^{\pi i/3}\mu\left( (\tau-4)/6,-1/3; \tau/3\right)-\frac{2i}{\sqrt{3}}q^{-1/24}e^{\pi i/3}R(\tau/6-1/3;\tau/3).
\end{align*}
Substituting  $\tau \rightarrow \tau/3$ and $b=-1/3$ in Lemma \lem{Rext}   
we rewrite $R(\tau/6-1/3;\tau/3)$ in terms of an Eichler integral. 
Using Proposition \propo{Rellprops}(c) we have
\begin{align*}        
R(\tau/6-1/3) &= R(-\tau/6 + 1/2) 
=q^{1/24}e^{-\pi i/3}\left(1- i\int\limits_{-\overline{\tau/3}}^{i\infty}\frac{g_{0,1/6}(z)dz}{\sqrt{-i(z+\tau/3)}}\right)\\
&=q^{1/24}e^{-\pi i/3}\left(1- \frac{i}{\sqrt{3}}\int\limits_{-\overline{\tau}}^{i\infty}\frac{g_{0,1/6}(z/3)dz}{\sqrt{-i(z+\tau)}}\right).
\end{align*} 
We also make the following observation that
\begin{align*}
      g_{0}(z)=\frac{g_{1}(-1/z)}{-(-iz)^{3/2}}
      =\frac{-g_{1/6,0}(-3/z)}{-(-iz)^{3/2}}
      =\frac{-i(-iz/3)^{3/2}g_{0,-1/6}(z/3)}{-(-iz)^{3/2}}
      =\frac{-ig_{0,1/6}(z/3)}{3\sqrt{3}}.
\end{align*}
Thus,
\begin{align*}
    \frac{1}{\sqrt{-i\tau}} h_0\left(\frac{-1}{\tau}\right)
    &= \frac{2\eta(\tau/3)^4}{3\eta(\tau)\eta(\tau/6)^2} 
    - \frac{4}{\sqrt{3}} q^{-1/24} e^{\pi i/3} \mu\left(\frac{\tau - 4}{6}, -\frac{1}{3}; \frac{\tau}{3}\right)-\frac{2i}{\sqrt{3}} \\
    &\quad - 2i\sqrt{3} \int\limits_{-\overline{\tau}}^{i\infty} \frac{g_0(z)}{\sqrt{-i(z+\tau)}} \, dz.
\end{align*}
Since
\begin{equation}
h_1(\tau) = 2 q^{1/3} \omega(\sqrt{q}) - 
       2i\sqrt{3} \int_{-\overline{\tau}}^{i\infty} \frac{g_0(z)}{\sqrt{-i(z+\tau)}}dz,
\mylabel{eq:h1def}
\end{equation}
we see that the transformation \eqn{Stransh0} is equivalent to proving 
this new identity involving the mock theta function $\omega(q)$:
\begin{equation}
2q^{1/3} \omega(\sqrt{q})
    = \frac{2\eta(\tau/3)^4}{3\eta(\tau)\eta(\tau/6)^2} 
    - \frac{4}{\sqrt{3}} q^{-1/24} e^{\pi i/3}\mu\left(\frac{\tau - 4}{6}, -\frac{1}{3}; \frac{\tau}{3}\right)
    - \frac{2i}{\sqrt{3}}.
\mylabel{eq:newomid}
\end{equation}
By replacing $\tau)$ by $6\tau$ this identity is equivalent to 
\eqn{NEWOMEGA2}. 
It can be shown that \eqn{NEWOMEGA2} is equivalent to \eqn{NEWOMEGA} by
replacing $q$ by $-q$. Thus the transformation \eqn{Stransh0} holds.

\subsection{An identity for $f(q)$}
\mylabel{subsec:idfq}
From equations \eqn{h1def} and \eqn{newomid} we have
$$
h_{1}(\tau) = 2 \frac{\eta(3\tau)^{4}}{\eta(\tau)\eta(3\tau/2)^{2}} 
    - 4i q^{-1/24} \mu\left( \tfrac{3\tau}{2}, \tau; 3\tau \right) 
     - 2i \sqrt{3} \int\limits_{-\overline{\tau}}^{i\infty} 
    \frac{g_{0}(z) \, dz}{\sqrt{-i(z + \tau)}}.
$$                  
We note that
\begin{align*}
g_{0}(z)&=\sum\limits_{n\in \mathbb{Z}} (-1)^n (n+1/3)e^{3\pi i (n+1/3)^2 z}
        =\sum\limits_{n\in 1/3 + \mathbb{Z}} e^{\pi i(n-1/3)}\, n\, e^{3\pi i n^2 z} 
        =e^{-\pi i/3}g_{1/3,1/2}(3z).
\end{align*}
Thus
\begin{align*}     
       2i\sqrt{3}\int\limits_{-\overline{\tau}}^{i\infty}\frac{g_{0}(z)dz}{\sqrt{-i(z+\tau)}}
       &=2i\sqrt{3}e^{-\pi i/3}\int\limits_{-\overline{\tau}}^{i\infty}\frac{g_{1/3,1/2}(3z)dz}{\sqrt{-i(z+\tau)}}=2ie^{-\pi i/3}\int\limits_{-\overline{3\tau}}^{i\infty}\frac{g_{1/3,1/2}(z)dz}{\sqrt{-i(z+3\tau)}}\\
&= -2 q^{-1/24} R(-\tau/2;3\tau),
   \end{align*}
using Theorem \thm{gabints} with $a=-1/6$ and $b=0$.
Thus
$$                    
h_{1}(\tau)=2\frac{\eta(3\tau)^{4}}{\eta(\tau)\eta(3\tau/2)^{2}}-4iq^{-1/24}\mu(3\tau/2,\tau;3\tau)-4iq^{-1/24}\mutwid(3\tau/2,\tau;3\tau),
$$                   
and
\begin{align*}
 h_{1}\left(\frac{-1}{\tau}\right)
&=\frac{\sqrt{-i\tau}\eta(\tau/3)^{4}}{3\eta(\tau)\eta(2\tau/3)^{2}}
   -4ie^{2\pi i/24\tau}\mutwid(-3/2\tau,-1/\tau;-3/\tau)\\
&=\frac{\sqrt{-i\tau}\eta(\tau/3)^{4}}{3\eta(\tau)\eta(2\tau/3)^{2}}
   - 4ie^{2\pi i/24\tau}\mutwid\left(\frac{-1/2}{\tau/3},\frac{-1/3}{\tau/3};\frac{-1}{\tau/3}\right),\\
    &=\frac{\sqrt{-i\tau}\eta(\tau/3)^{4}}{3\eta(\tau)\eta(2\tau/3)^{2}}+4i\sqrt{-i\tau/3}\left(\mu(-1/2,-1/3;\tau/3)+\frac{i}{2}R(-1/6;\tau/3) \right),
\end{align*}
$$
\frac{1}{\sqrt{-i\tau}}h_{1}\left(\frac{-1}{\tau}\right)
= \frac{\eta(\tau/3)^{4}}{3\eta(\tau)\eta(2\tau/3)^{2}}
  +\frac{4i}{\sqrt{3}}\mu(-1/2,-1/3;\tau/3)
   -\frac{2}{\sqrt{3}}R(-1/6;\tau/3).
$$                   
Now from Theorem \thm{gabints} 
$$
R(-1/6;\tau/3)
=-\frac{1}{\sqrt{3}}\int\limits_{-\overline{\tau}}^{i\infty}
  \frac{g_{1/2,2/3}(z/3)dz}{\sqrt{-i(z+\tau)}}.
$$ 
We also note that                    
$$                     
      g_{1}(z)=\frac{g_{0}(-1/z)}{-(-iz)^{3/2}}
      =\frac{e^{-\pi i/3}g_{1/3,1/2(-3/z)}}{-(-iz)^{3/2}}
=\frac{e^{-\pi i/3}ie^{\pi i/3}(-iz/3)^{3/2}g_{1/2,-1/3(z/3)}}{-(-iz)^{3/2}}
    =\frac{ig_{1/2,2/3}(z/3)}{3\sqrt{3}}
$$                       
Thus $g_{1/2,2/3}(z/3)=-i3\sqrt{3}g_{1}(z)$.
Therefore showing \eqn{Stransh1} is equivalent to the following:
$$              
\frac{\eta(\tau/3)^{4}}{3\eta(\tau)\eta(2\tau/3)^{2}}
 + \frac{4i}{\sqrt{3}}\mu(-1/2,-1/3;\tau/3)
 -  2i\sqrt{3}\int\limits_{-\overline{\tau}}^{i\infty}
     \frac{g_{1}(z)dz}{\sqrt{-i(z+\tau)}}
 = q^{\frac{-1}{24}}f(q)
  - 2i\sqrt{3}\int\limits_{-\overline{\tau}}^{i\infty}
     \frac{g_{1}(z)dz}{\sqrt{-i(z+\tau)}};
$$                   
i.e.  
$$                    
q^{\frac{-1}{24}}f(q)
 = \frac{\eta(\tau/3)^{4}}{3\eta(\tau)\eta(2\tau/3)^{2}}
   + \frac{4i}{\sqrt{3}}\mu(-1/2,-1/3;\tau/3).
$$               
By replacing $\tau$ by $3\tau$ this identity is equivalent to 
\eqn{NEWF}. This proves \eqn{NEWF} since \eqn{Stransh0} and
\eqn{Stransh1} hold.

\section{Conclusion}
In this paper we prove Zwegers' transformations for the mock theta
functions $f$ and $\omega$ without using Watson's transformation
formulas\cite{Wa1936} and instead use Appell-Lerch sums from Zwegers'
thesis \cite{Zw2002}. In the process we discover new identities
for mock theta functions \eqn{NEWOMEGA},\eqn{NEWOMEGA2} and
\eqn{NEWF}.We prove \eqn{NEWOMEGA} by calculating a $3$-dissection
of each term on the right. Identities \eqn{NEWOMEGA2} and
\eqn{NEWF} are connected to two identities in Ramanujan's Lost
Notebook in \eqn{rlnomega} and \eqn{rlnF} below. We pose the
problem of showing if \eqn{rlnomega} and \eqn{rlnF} can be proved
from \eqn{NEWOMEGA2} and \eqn{NEWF} or vice versa. 

Define
$$       
\psi(q) := \sum_{n=0}^{\infty} q^{n(n+1)/2}
= \frac{(q^2;q^2)_\infty}{(q;q^2)_\infty},\quad 
\phi(q) := \sum_{n=-\infty}^{\infty}(-1)^{n} q^{n^{2}}
= \frac{(q;q)_\infty}{(-q;q)_\infty}.
$$        
Then
\begin{align}    
\sum_{n=0}^{\infty} 
\frac{q^{6n^2}}{(q;q^6)_{n+1}(q^5;q^6)_n}
&= \frac{1}{2}\Biggl(
1 + q^2 \omega_3(q^3) + \frac{\psi^2(q)}{(q^6;q^6)_\infty}
\Biggr),
\mylabel{eq:rlnomega}
\\
\sum_{n=0}^{\infty} 
\frac{(-1)^n (q;q)_{2n} q^{n^2}}{(q^6;q^6)_n}
&= \frac{3}{4} f_3(q^3) 
   + \frac{1}{4}\,\frac{\varphi^2(-q)}{(q^3;q^3)_\infty}.
\mylabel{eq:rlnF}
\end{align}   
A natural question is to derive modular transformations for
the completions of all of Ramanujan's mock theta functions
using Appell-Lerch sums.  Klein and Kupka \cite{KleinKupka2021}
used results from Gordon–McIntosh \cite[pp.109-130]{GM2012} to
work out the transformations for the completions of mock theta
functions of order $2$, and the remaining $3$rd, $6$th, and $8$th
order cases. In a forthcoming paper we consider this question
and give similar identities for mock theta functions analogous to
\eqn{NEWOMEGA}-\eqn{NEWF}.

\textbf{Acknowledgments}.
The authors would like to thank George Andrews, Ole Warnaar and Jonathan 
Bradley-Thrush for their helpful comments and suggestions on some 
of the identities mentioned in the paper.

\bibliographystyle{amsplain}

\begin{thebibliography}{10}


\bibitem{An-Ga1988}
G.~E.~Andrews and F.~G.~Garvan,
\textit{Dyson's crank of a partition}
J Bull. Amer. Math. Soc. (N.S.)
\textbf{18 (2)}
(1988)
167--171.
\bibitem{AndrewsHickerson1991}
G.~E.~Andrews and D.~R.~Hickerson,
\textit{Ramanujan's ``lost'' notebook. VII: The sixth order mock theta functions},
\textit{Advances in Mathematics} \textbf{89} (1991), 60--105.


\bibitem{At-SwD1954}
A.~O.~L.~Atkin and P.~Swinnerton-Dyer,
\textit{Some properties of partitions}
J Proc. London Math. Soc.
\textbf{3 (4)}
(1954)
84--106.


\bibitem{JBT2022}
J.~G.~Bradley-Thrush,
\textit{Properties of the Appell-Lerch function (I)},
Ramanujan J.
\textbf{57 (1)}
(2022)
291--367.


\bibitem{Br-Fo-On-Ro2017}
K.~Bringmann, A.~Folsom, K.~Ono, and L.~Rolen.
\textit{Harmonic Maass forms and mock modular forms: theory and applications},
Amer. Math. Soc. Colloq. Publ.,
American Mathematical Society, Providence, RI, 
(2017) 
xv+391 pp.
\bibitem{Ekin}
A.~B. Ekin, 
\textit{The rank and the crank modulo $5$}, 
Turkish J. Math. {\bf 21} (1997), no.~2, 169--178. 
\bibitem{GasperRahman}
G.~Gasper and M.~Rahman,
\textit{Basic Hypergeometric Series},
2nd ed.,
Encyclopedia of Mathematics and its Applications, Vol.~96,
Cambridge University Press, Cambridge, 2004.
\bibitem{GM2012}
B.~Gordon and R.~J.~McIntosh,
\textit{A survey of classical mock theta functions},
in \textit{Partitions, $q$-Series, and Modular Forms},
edited by K.~Alladi and F.~Garvan,
pp.~95--144,
Springer, Berlin, 2012.
\bibitem{JS2016}
C.~Jennings-Shaffer, \emph{Overpartition rank differences modulo 7 by {M}aass
  forms}, J. Number Theory \textbf{163} (2016), 331--358.

\bibitem{Ka2009}
Soon-Yi Kang, \emph{Mock {J}acobi forms in basic hypergeometric series},
  Compos. Math. \textbf{145} (2009), no.~3, 553--565. 
\bibitem{KleinKupka2021}
D. Klein and J. Kupka, 
\textit{Completions and algebraic formulas for the coefficients of Ramanujan's mock theta functions}, 
Ramanujan J. {\bf 56} (2021), no.~3, 1029--1060. 
\bibitem{Rhoades2013}
R.~C.~Rhoades,
\textit{On Ramanujan's definition of mock theta function},
\textit{Proceedings of the National Academy of Sciences of the United States of America}
\textbf{110} (2013), no.~19, 7592--7594.
\bibitem{Wa1936}
G.~N. Watson, \emph{The {F}inal {P}roblem : {A}n {A}ccount of the {M}ock
  {T}heta {F}unctions}, J. London Math. Soc. \textbf{11} (1936), no.~1, 55--80.


\bibitem{ZP2002}
D.~Zagier,
\textit{Ramanujan's mock theta functions and their applications (after Zwegers and Ono-Bringmann)},
S\'eminaire Bourbaki. Vol. 2007/2008,
Ast\'erisque
(2009)
\textbf{326},
Exp. No. 986,
vii–viii,
143--164.
\bibitem{Zw2002}
S.~P. Zwegers, \emph{Mock {$\theta$}-functions and real analytic modular
  forms}, {$q$}-series with applications to combinatorics, number theory, and
  physics ({U}rbana, {IL}, 2000), Contemp. Math., vol. 291, Amer. Math. Soc.,
  Providence, RI, 2001, pp.~269--277. 


\bibitem{Zw-thesis}
S.P. Zwegers, \emph{Mock theta functions}, Ph.D. thesis, Universiteit Utrecht,
  2002, pp.~96.



\end{thebibliography}


\end{document}